\documentclass[a4paper,fleqn]{article}
\usepackage{amsmath}
\usepackage{latexsym}
\usepackage{amssymb}
\usepackage{amsfonts}
\usepackage{rsfs}
\usepackage[dvips]{graphics}
\usepackage[all]{xy}
\usepackage{amsthm}
\usepackage{enumerate}
\usepackage{url}

\DeclareMathOperator{\dom}{\mathrm{dom}}
\DeclareMathOperator{\rng}{\mathrm{rng}}

\DeclareMathOperator{\Fin}{\mathrm{Fin}}

\newcommand{\mon}[2]{\mathrm{mon}_{\mathscr{#1}}\left(#2\right)}
\newcommand{\fig}[2]{\mathrm{fig}_{\mathscr{#1}}\left(#2\right)}

\newcommand{\cl}[2]{\mathrm{cl}_{\mathscr{#1}}\left(#2\right)}
\newcommand{\intrr}[2]{\mathrm{int}_{\mathscr{#1}}\left({#2}\right)}

\newcommand{\Cntd}[2]{\mathrm{Cntd}_{\mathscr{#1}}\left(#2\right)}
\newcommand{\Sep}[3]{\mathrm{Sep}_{\mathscr{#1}}\left(#2,\,#3\right)}

\newcommand{\Fun}[2]{\mathrm{Fun}\left({#1},{#2}\right)}

\newcommand*{\FN}{\mathit{FN}}
\newcommand*{\FQ}{\mathit{FQ}}
\newcommand*{\BQ}{\mathit{BQ}}

\let\uhr\upharpoonright
\renewcommand*{\upharpoonright}{\hspace{-.07cm}\uhr\hspace{-.07cm}}
\newtheorem{thm}{Theorem}

\newtheorem{cor}[thm]{Corollary}

\theoremstyle{remark}

\theoremstyle{definition}
\newtheorem{defn}[thm]{Definition}
\newtheoremstyle{axiom1}
{3pt}
{3pt}
{\itshape}
{}
{\bfseries\itshape}
{}
{.5em}
{}
\newtheoremstyle{axiom2}
{3pt}
{3pt}
{\itshape}
{}
{\bfseries\itshape}
{\\}
{.5em}
{}
\theoremstyle{axiom1}
\title{Basic Topological Concepts and a Construction of Real Numbers in Alternative Set Theory}
\author{Kiri Sakahara\thanks{Yokohama National University and Kanagawa University, Kanagawa, Japan.} \and Takashi Sato\thanks{Toyo University, Tokyo, Japan.}}
\date{}
\begin{document}

\maketitle
\begin{abstract}
Alternative set theory (AST) may be suitable for the ones who try to capture objects or phenomenons with some kind of indefiniteness of a border.
While AST provides various notions for advanced mathematical studies, correspondence of them to that of conventional ones are not fully developed.
This paper presents basic topological concepts in AST, and shows their correspondence with those of conventional ones, and isomorphicity of a system of real numbers in AST to that of conventional ones.
\end{abstract}

\section{Introduction}
To represent a collection of human beings who are Charles Darwin's ancestors, what kind of mathematical concepts are appropriate?
Not a set, wrote Vop\v{e}nka in his \textit{Mathematics in the Alternative Set Theory} \cite{ast}, since it has no clear boundary between human beings and apes, both of which are Mr. Darwin's predecessors and in blood relationship within certain steps.
But the former is included in the collection while the latter is not, the border must remain indefinite.
To capture such nebulous collections properly, one has to develop alternative concepts, other than sets, admitting this kind of indefiniteness.

This example is too peculiar.
However, the type of indefiniteness is prevalent when one tries to put together a huge amount of objects and to represent it by a single mathematical entity.
When Vop\v{e}nka set out to rebuild a set-theoretical framework from a different perspective, putting in place this indefiniteness within its framework consistently was one of his main objectives.
The resulting system, the alternative set theory(AST for short), enables this kind of representation possible, especially when one tries to capture objects or phenomenons in the real world.

Vop\v{e}nka \cite{ast} displays various notions of AST, each of which is fundamental to advance mathematical studies.
While formal aspects of the system are relatively detailed, correspondence of them to that of conventional ones are not fully developed.
For example, basic ideas of topological spaces, such as open class or continuity, are not mentioned.
Lack of them may, unfortunately, keep ones who need them most away from these attractive frameworks only because those concepts they are accustomed to are absent.

The present paper aims to fill in this gap.
However, almost all notions given here are simple extension of that of Vop\v{e}nka \cite{ast}, except that those regarding \textit{continua} and \textit{morphisms} are due to Tsujishita \cite{tjst}.
The readers can find relatively compact explanation of AST, such as an axiomatic system of AST, in Vop\v{e}nka and Trlifajov\'{a} \cite{encycro-ast} and Sakahara and Sato \cite{cogjump}.

\section{Preliminaries}
Let us start with constructing a number system in accordance with Vop\v{e}nka \cite{ast}.
The class of \textit{natural numbers} $N$ is defined as:
\[
N\ =\ \left\{x\ ;\
\begin{matrix}
\left(\forall y\in x\right) \left(y\subseteq x\right)\\
\wedge\left(\forall y,z\in x \right) \left(y\in z \vee y=z \vee z\in y\right)
\end{matrix}
\right\},
\]
while the class of \textit{finite natural numbers} $\FN$ consists of the numbers represented by finite sets
\[
\FN \ =\ \left\{x\in N\ ;\
\Fin(x)\right\}
\]
in which $\Fin(x)$ means that each subclass of $x$ is a set.
The class is said to be \textit{countable} iff it is infinite class and any of its initial segment is finite.
One example of countable classes is $\FN$ since all of its initial segments are finite, while $N$ is \textit{uncountable}.

The class of all integers $Z$ and that of all rational numbers are defined respectively as:
\[
Z\ =\ N\cup \left\{ \langle 0,a\rangle;\,
 a\ne 0
\right\}
\qquad\text{and}\qquad
Q\ =\ \left\{
\frac{x}{y}\ ;\ x,\,y\in Z \wedge y\ne 0
\right\}.
\]
$\BQ\subseteq Q$ denotes the class of \textit{bounded rational numbers} and $\FQ\subseteq BQ$ the class of \textit{finite rational numbers}, i.e.,
\[
\BQ=\{x\ ;\ \left(\exists i\in\FN\right)
  \left(|x|\leq i\right)
\}
\qquad\text{and}\qquad
\FQ\ =\ \left\{
\frac{x}{y}\ ;\
x,y\in\FN \wedge y\ne 0
\right\}.
\]
The arithmetic and orders on $Q$ are defined as same as usual.
Real numbers will be defined later.
\medskip

Next, let us proceed to the concepts that are necessary to build topological structures.
A class $X$ is a $\sigma$-class (a $\pi$-class) iff $X$ is the union (the intersection) of a countable sequence of set-theoretically definable classes\footnote{
  A \textit{set-theoretically definable class} is the class defined of the form $\{x: \psi(x)\}$ in which $\psi(\cdot)$ is a set formula, the one which includes set variables only.
  A universal class $V$,  classes of natural numbers $N$ and rationals $Q$ are all set-theoretically definable, while classes of finite natural numbers $\FN$, finite rationals $\FQ$ and bounded rationals $\BQ$ are not since $\Fin$ is not a set-formula.
}.

The basic properties of these classes are listed in the next theorem.
\begin{thm}[The 6th theorem at the Section 5 of Chapter 2 of Vop\v{e}nka \cite{ast}]　\label{pi-sigma}
  \begin{enumerate}[(1)]
  \item $X$ is a $\sigma$-class iff $X$ is the union of a countable ascending sequence of  set-theoretically definable classes.
    $X$ is a $\pi$-class iff $X$ is the intersection of a countable descending sequence of set-theoretically definable classes.
  \item The union of two $\pi$-classes is a $\pi$-class.
    The intersection of two $\sigma$-class is a $\sigma$-class.
  \item The union of a countable sequence of $\sigma$-classes is a $\sigma$-class.
    The intersection of a countable sequence of $\pi$-classes is a $\pi$-class.
  \item If $X$, $Y$ are $\sigma$-classes ($\pi$-classes) then $X\times Y$ is a $\sigma$-class (a $\pi$-class).
  \item If $X$ is a $\sigma$-class (a $\pi$-class) then $\dom(X)\equiv\left\{ u;\langle u,v\rangle\in X\right\}$ is a $\sigma$-class (a $\pi$-class).
  \end{enumerate}
\end{thm}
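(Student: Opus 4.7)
The plan is to dispatch each clause separately, in each case reducing the statement to a routine manipulation of countable unions or intersections of set-theoretically definable classes, together with the observations that Boolean operations and Cartesian products preserve set-theoretic definability and that $\FN \times \FN$ is in bijection with $\FN$. For (1), given a $\sigma$-class $X = \bigcup_n A_n$, I would replace each $A_n$ by the finite union $B_n = \bigcup_{k \le n} A_k$, which remains set-theoretically definable; the sequence $(B_n)$ is then ascending and has the same union as $(A_n)$. The dual construction via finite intersections handles the $\pi$-class case, so for the remaining clauses one may freely assume ascending or descending presentations.

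For (2), starting from descending presentations $X = \bigcap_n A_n$ and $Y = \bigcap_n B_n$, I would verify $X \cup Y = \bigcap_n (A_n \cup B_n)$. The forward inclusion is immediate; for the reverse, if $x \notin X$ then $x \notin A_{n_0}$ for some $n_0$, so descent of $(A_n)$ forces $x \notin A_n$ for all $n \ge n_0$, whence $x \in B_n$ for all $n \ge n_0$, and descent of $(B_n)$ then pushes $x$ into every $B_n$, i.e.\ $x \in Y$. The dual identity $X \cap Y = \bigcup_n (A_n \cap B_n)$ handles the $\sigma$-case. For (3), if each $X_i = \bigcup_j A_{i,j}$, then $\bigcup_i X_i = \bigcup_{(i,j)} A_{i,j}$, a union indexed by a countable class and hence re-indexable as a countable sequence of set-theoretically definable classes; dually for $\pi$-classes. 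For (4), ascending presentations give $X \times Y = \bigcup_n (A_n \times B_n)$, with products of set-theoretically definable classes being set-theoretically definable, and the descending case is entirely parallel.

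The main obstacle is (5). The $\sigma$-case is immediate because $\dom$ commutes with arbitrary unions. The $\pi$-case is more delicate: writing $X = \bigcap_n A_n$ as a descending intersection with a uniform set formula $\psi(n, x)$ so that $A_n = \{x : \psi(n, x)\}$, the inclusion $\dom(X) \subseteq \bigcap_n \dom(A_n)$ is trivial, but the reverse direction asks, given $u \in \dom(A_n)$ for every $n \in \FN$, for a single $v$ with $\langle u, v \rangle \in A_n$ for all $n$. Here I plan to invoke the overspill (prolongation) principle of AST: since $(A_n)$ is descending, the set formula $\phi(n) \equiv (\exists v)(\forall k \le n)\,\psi(k, \langle u, v\rangle)$ holds at every $n \in \FN$, so by overspill it holds at some non-standard $n^{\ast} \in N \setminus \FN$. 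Any witness $v$ for $\phi(n^{\ast})$ then satisfies $\langle u, v\rangle \in A_k$ for every $k \le n^{\ast}$, in particular for every $k \in \FN$, which delivers $u \in \dom(X)$ and hence $\dom(X) = \bigcap_n \dom(A_n)$, a $\pi$-class.
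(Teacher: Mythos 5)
The paper states this result only as a citation of Vop\v{e}nka and gives no proof of its own, so there is nothing internal to compare against; judged on its own terms, your argument is correct and is essentially the standard one. Clauses (1)--(4) are the routine monotone re-presentations and index manipulations, and you correctly identify clause (5) for $\pi$-classes as the only point of substance: the inclusion $\bigcap_n \dom(A_n)\subseteq\dom\bigl(\bigcap_n A_n\bigr)$ genuinely needs the prolongation/overspill principle (equivalently, that $\FN$ is not set-theoretically definable, so a set-theoretically definable subclass of $N$ containing $\FN$ must contain some $n^{\ast}\in N\setminus\FN$), and your use of it is exactly how Vop\v{e}nka argues. The one point worth making explicit is the uniformity assumption you state in passing: throughout (3) and (5) you need the countable families to be given by a single set formula $\psi(n,x)$ with set parameters, since otherwise ``countable sequence of set-theoretically definable classes'' is not a single class and the overspill formula $\phi(n)$ would not be a set formula; this is how Vop\v{e}nka defines such sequences, so your reading is the intended one, but it should be said rather than assumed.
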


A class $\doteq$ is a \textit{$\pi$-equivalence} iff $\doteq$ is a $\pi$-class and an equivalence relation.
A sequence $\{R_n;\,n\in \FN\}$ is a \textit{generating sequence} of an equivalence $\doteq$ iff the following conditions hold:
\begin{enumerate}[(1)]
\item For each $n$, $R_n$ is a set-theoretically definable, reflexive, and symmetric relation.
\item For each $n$ and each $x$, $y$, $z$, $\langle x,y\rangle\in R_{n+1}$ and $\langle y,z\rangle\in R_{n+1}$ implies $\langle x,z\rangle\in R_n;\,R_0=V^2$.
\item $\doteq$ is the intersection of all the classes $R_n$.
\end{enumerate}
If $\{R_n;\,n\in \FN\}$ is a generating sequence of $\doteq$ then (i) $\doteq$ is a $\pi$-equivalence, (ii) $R_{n+1}\subseteq R_n$ for each $n$, and (iii) $x\doteq y$ holds iff $\langle x,y\rangle \in R_n$ for each $n$.
It is also true that every $\pi$-equivalence has its generating sequence.

\begin{thm}[The first theorem at the Section 1 of Chapter 3 of Vop\v{e}nka \cite{ast}]
Each $\pi$-equivalence has a generating sequence.
\end{thm}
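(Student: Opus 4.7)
The plan is to invoke Theorem~\ref{pi-sigma}(1) to write $\doteq=\bigcap_{n\in\FN}S_n$ with $\{S_n\}$ a descending sequence of set-theoretically definable classes, and then replace each $S_n$ by $S_n\cap S_n^{-1}$ so that every $S_n$ is symmetric (reflexivity is automatic because $\id\subseteq\doteq\subseteq S_n$). The modified sequence is still descending and set-theoretically definable, and still has intersection $\doteq$.

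The heart of the proof is a Cauchy-like refinement property: for every $n\in\FN$ there exists $m\in\FN$ with $S_m\circ S_m\subseteq S_n$. I would establish this by contradiction, using overspill. Fix $n_0\in\FN$ for which the refinement fails, and consider the set-theoretic predicates
\[
\chi(m):\;(\forall k\leq m)(S_m\subseteq S_k),\qquad \rho(m):\;(\exists x,y,z)\bigl(\langle x,y\rangle,\langle y,z\rangle\in S_m\wedge\langle x,z\rangle\notin S_{n_0}\bigr).
\]
(Here I read the sequence $\{S_n\}$ as presented uniformly by a single set formula $\phi(n,x)$, so that $S_n$ makes sense for arbitrary $n\in N$.) By the descending property, $\chi(m)$ holds for all $m\in\FN$; by the contradiction hypothesis, so does $\rho(m)$. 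Since $\FN$ is not set-theoretically definable, the set-theoretically definable class of $m$ satisfying $\chi\wedge\rho$ must reach beyond $\FN$, so some non-finite $m^{\ast}$ satisfies both. Then $\chi(m^{\ast})$ forces $S_{m^{\ast}}\subseteq S_k$ for every $k\in\FN$, hence $S_{m^{\ast}}\subseteq\doteq$; the witness triple $(x,y,z)$ supplied by $\rho(m^{\ast})$ therefore lies entirely in $\doteq$, and transitivity of $\doteq$ yields $\langle x,z\rangle\in\doteq\subseteq S_{n_0}$, contradicting $\rho(m^{\ast})$.

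With the refinement property in hand, I build the generating sequence by recursion: set $R_0=V^2$, and having defined $R_n=S_{k(n)}$, let $k(n+1)$ be the least $m\in\FN$ with $S_m\circ S_m\subseteq S_{k(n)}$ and put $R_{n+1}=S_{k(n+1)}$. Each $R_n$ is then set-theoretically definable, reflexive, and symmetric; the condition $R_{n+1}\circ R_{n+1}\subseteq R_n$ holds by the choice of $k(n+1)$; and since $k(n)\geq n$, the subsequence is cofinal in $\FN$, giving $\bigcap_n R_n=\bigcap_m S_m=\doteq$.

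The main obstacle I anticipate is justifying the uniform presentation of $\{S_n\}$ on which the overspill step relies, i.e.\ treating $S_n$ as $\{x:\phi(n,x)\}$ for a single set formula $\phi$ so that $\chi(m)$ and $\rho(m)$ are themselves set formulas in $m$. This is the standard reading of ``countable sequence of set-theoretically definable classes'' in AST, but it should be spelled out carefully against the axiomatic setup in use; granted that, the combinatorial content of the argument is routine.
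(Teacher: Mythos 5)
The paper itself offers no proof of this statement---it is imported verbatim from Vop\v{e}nka---so your proposal can only be measured against the standard argument, which it does in fact reproduce: symmetrize a descending presentation $\doteq\;=\bigcap_{n}S_n$, establish the refinement property $(\forall n)(\exists m)(S_m\circ S_m\subseteq S_n)$ by an overspill argument, and extract a subsequence. Your overspill step is sound: the class $\{m\in\gamma:\chi(m)\wedge\rho(m)\}$ (with $\gamma=\dom(p)$ for a prolonged parameter function $p$) is a set-theoretically definable subclass of a set containing $\FN$, hence must contain an infinite $m^{\ast}$, and $\chi(m^{\ast})$ forces $S_{m^{\ast}}\subseteq\;\doteq$, after which transitivity kills $\rho(m^{\ast})$. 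The uniformization worry you raise is real but is exactly how Vop\v{e}nka sets up countable sequences of set-theoretically definable classes (a single set formula with a countable sequence of set parameters, to which the prolongation axiom applies); alternatively you could quote the standard AST fact that a countable descending sequence of nonempty set-theoretically definable classes has nonempty intersection, apply it first to $Y_m=\{y:\langle x,y\rangle\in S_m\wedge\langle y,z\rangle\in S_m\}$ to get $\bigcap_m(S_m\circ S_m)=\;\doteq$, and then to $(S_m\circ S_m)\setminus S_{n_0}$, which hides the overspill inside an already-quoted theorem.

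There is one genuine, though easily repaired, error: the claim that $k(n)\geq n$, on which you rest the identity $\bigcap_nR_n=\;\doteq$, does not follow from taking $k(n+1)$ to be the \emph{least} $m$ with $S_m\circ S_m\subseteq S_{k(n)}$. If, say, $S_0$ happens to be transitive while the sequence $(S_n)$ is strictly descending, then $k(n)=0$ for all $n$ and $\bigcap_nR_n=S_0\supsetneq\;\doteq$, so condition (3) of the definition of a generating sequence fails. The fix is one line: since the $S_m$ are descending, any $m$ above a witness is again a witness, so set $k(n+1)=\max\{m_0,\,k(n)+1,\,n+1\}$ where $m_0$ is the least witness; this restores both $R_{n+1}\circ R_{n+1}\subseteq R_n$ and cofinality. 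With that adjustment (and the harmless convention $S_0=V^2$ so that $R_0=V^2$ fits the recursion), the proof is complete.
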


An equivalence $\doteq$ is said to be \textit{compact} iff for each infinite set $u$ there are $x,y \in u$ such that $x \ne y$ and $x\doteq y$.
A relation $\doteq$ is called an \textit{indiscernibility equivalence} iff $\doteq$ is a compact $\pi$-equivalence.

In the environment of alternative set theory, each indiscernibility equivalence represents a topological structure.
Let us denote this structure as continuum after Tsujishita \cite{tjst}.
A \textit{continuum} is a pair $\mathscr{C}=\langle C,\doteq_C\rangle$ of a set-theoretically definable class $C$ and an indiscernibility equivalence defined on $C$.
$C$ is said to be the \textit{support} of the continuum $\mathscr{C}$.

It will be investigated how the structures of continuum correspond to the standard topological concepts in the following sections.

\section{Open/Closed Classes}
Let us fix one continuum $\mathscr{C}=\langle C,\doteq_C\rangle$.
A class $X$ is a \textit{figure} of $\mathscr{C}$ iff $X$ contains with each $x$ all $y$ which satisfies $x\doteq_C y$.

A \textit{monad} of $x$ is defined as
\[
\mon{C}{x}\ =\
\left\{
y\in C: y\doteq_C x
\right\}.
\]
Evidently, $\mon{C}{x}$ is a figure for each $x$.
A class $X$ is a figure iff the monad of each element of $X$ is a subclass of $X$.

The \textit{figure of} $X$ is defined for each $X$ as follows:
\[
\fig{C}{X} \ =\ \{y\in C:(\exists x\in X)(x\doteq_C y)\}.
\]
Basic properties of figures are given as the following theorem.
\begin{thm}[the second theorem at the Section 2 of Chapter 3 of Vop\v{e}nka \cite{ast}]\label{basic}
For each $X$, $Y$ and $x$ of $\mathscr{C}$ we have the following:
\begin{enumerate}[(1)]
\item $\fig{C}{X}$ is a figure.
\item $X\subseteq Y$ implies $\fig{C}{X}\subseteq \fig{C}{Y}$.
\item If $X\subseteq Y$ and if $Y$ is a figure then $\fig{C}{X}\subseteq Y$.
\item $\fig{C}{X\cup Y}=\fig{C}{X}\cup\fig{C}{Y}$.
\item $\fig{C}{X}\cap\fig{C}{Y}=\emptyset$ iff $\fig{C}{X}\cap Y=\emptyset$.
\item $\mon{C}{x} = \fig{C}{\{x\}}$.
\end{enumerate}
\end{thm}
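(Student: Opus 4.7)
The plan is to verify each of the six items directly from the definition $\fig{C}{X}=\{y\in C:(\exists x\in X)(x\doteq_C y)\}$, relying only on the fact that $\doteq_C$ is an equivalence relation on $C$ together with the two equivalent formulations of ``figure'' (the defining condition and the monad-based reformulation).

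For \textbf{(1)}, I would take $y\in\fig{C}{X}$ and $z\in C$ with $y\doteq_C z$, pick a witness $x\in X$ with $x\doteq_C y$, and use transitivity to obtain $x\doteq_C z$, hence $z\in\fig{C}{X}$. Items \textbf{(2)} and \textbf{(4)} are immediate logical manipulations of the existential quantifier in the definition, and \textbf{(6)} follows by symmetry of $\doteq_C$: $\fig{C}{\{x\}}=\{y\in C:x\doteq_C y\}=\{y\in C:y\doteq_C x\}=\mon{C}{x}$. For \textbf{(3)}, given $y\in\fig{C}{X}$ there is $x\in X\subseteq Y$ with $x\doteq_C y$; because $Y$ is a figure and $x\in Y$, the element $y$ (which lies in $\mon{C}{x}$) must belong to $Y$.

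The only item that requires combining ingredients is \textbf{(5)}. The ``only if'' direction uses reflexivity: any $y\in Y\cap C$ satisfies $y\doteq_C y$, hence $y\in\fig{C}{Y}$, so $\fig{C}{X}\cap Y\subseteq\fig{C}{X}\cap\fig{C}{Y}=\emptyset$ (noting $\fig{C}{X}\subseteq C$). For the ``if'' direction, suppose toward a contradiction that some $z$ lies in $\fig{C}{X}\cap\fig{C}{Y}$; then there is $y\in Y$ with $y\doteq_C z$, and since $\fig{C}{X}$ is a figure by \textbf{(1)}, the element $y$ belongs to $\fig{C}{X}$, contradicting $\fig{C}{X}\cap Y=\emptyset$.

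None of the steps looks genuinely hard; the only place to be careful is the interplay in \textbf{(5)} between membership in $Y$ versus membership in $\fig{C}{Y}$, which is handled by reflexivity in one direction and by the fact proved in \textbf{(1)} in the other. Everything else is a direct unfolding of definitions using the equivalence relation properties of $\doteq_C$.
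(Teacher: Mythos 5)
Your proof is correct and complete; the paper itself states this theorem without proof (it is quoted from Vop\v{e}nka), so there is nothing to compare against, and your direct verification from the definition of $\fig{C}{X}$ and the equivalence-relation properties of $\doteq_C$ is the standard argument. You rightly isolate the only delicate point, namely item (5), where reflexivity gives $Y\cap C\subseteq\fig{C}{Y}$ in one direction and the figure property from (1) transports a witness $y\in Y$ into $\fig{C}{X}$ in the other.
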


$X$, $Y$ of $\mathscr{C}$ are \textit{separable}, noted as $\Sep{C}{X}{Y}$, iff there is a set-theoretically definable class $Z$ of $\mathscr{C}$ such that $\fig{C}{X}\subseteq Z$ and $\fig{C}{Y}\cap Z = \emptyset$.
It is evident that $\Sep{C}{X}{Y}$ is symmetric.

The closure of a class $X$, denoted as $\cl{C}{X}$, is defined as follow
\[
\cl{C}{X}\ =\ \left\{
x\in C: \neg\Sep{C}{\{x\}}{X}
\right\}.
\]
The interior of a class $X$ is given dually as $\intrr{C}{X}\ =\ C\setminus\cl{C}{C\setminus X}$.

It is evident that the following equation holds:
\[
\cl{C}{C\setminus M}\ =\ C\setminus \intrr{C}{M},
\]
since
$C\setminus\intrr{C}{ M}=C\setminus\left(C\setminus\cl{C}{C\setminus M}\right)=\cl{C}{C\setminus M}$.
It is also easy to verify that the following equation holds:
\[
\cl{C}{M}\ =\ C\setminus\intrr{C}{C\setminus M}.
\]

Each closure $\cl{C}{X}$ has the properties illustrated in the next theorem.
\begin{thm}[the 5th theorem at the Section 2 of Chapter 3 of Vop\v{e}nka \cite{ast}]\label{clo}
For each $X$, $Y$ of $\mathscr{C}$ we have the following:
\begin{enumerate}[(1)]
\item $\cl{C}{X}$ is a figure.
\item $\fig{C}{X} = \fig{C}{Y}$ implies $\cl{C}{X}=\cl{C}{Y}$.
\item $X\subseteq \cl{C}{X}$.
\item $X\subseteq Y$ implies $\cl{C}{X}\subseteq \cl{C}{Y}$.
\item $\cl{C}{X\cup Y}\ =\ \cl{C}{X}\cup\cl{C}{Y}$.
\end{enumerate}
\end{thm}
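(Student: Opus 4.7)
The strategy is to verify each of the five clauses directly from the definitions of $\cl{C}{\cdot}$ and $\Sep{C}{\cdot}{\cdot}$, invoking Theorem \ref{basic} wherever figure-level facts are needed.

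For (1) and (2), the key observation is that $\Sep{C}{\{x\}}{X}$ depends on $x$ only through $\fig{C}{\{x\}} = \mon{C}{x}$ (Theorem \ref{basic}(6)) and on $X$ only through $\fig{C}{X}$. Thus $x\doteq_C y$ forces $\fig{C}{\{x\}} = \fig{C}{\{y\}}$, so membership in $\cl{C}{X}$ is invariant under $\doteq_C$, giving the figure property in (1); and $\fig{C}{X} = \fig{C}{Y}$ immediately yields $\cl{C}{X}=\cl{C}{Y}$ in (2).

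Clause (3) goes by contradiction: if $x\in X$ and some set-theoretically definable $Z$ witnessed $\Sep{C}{\{x\}}{X}$, then $x\in\fig{C}{\{x\}}\subseteq Z$ while $x\in X\subseteq \fig{C}{X}$, contradicting $Z\cap\fig{C}{X}=\emptyset$. Clause (4) is immediate from monotonicity of $\fig{C}{\cdot}$ (Theorem \ref{basic}(2)): when $X\subseteq Y$, any witness of $\Sep{C}{\{x\}}{Y}$ automatically witnesses $\Sep{C}{\{x\}}{X}$, so $\neg\Sep{C}{\{x\}}{X}$ propagates to $\neg\Sep{C}{\{x\}}{Y}$.

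For (5), the inclusion $\cl{C}{X}\cup\cl{C}{Y}\subseteq\cl{C}{X\cup Y}$ follows from (4). The reverse is the only step requiring real work, and uses Theorem \ref{basic}(4): given witnesses $Z_1,Z_2$ of $\Sep{C}{\{x\}}{X}$ and $\Sep{C}{\{x\}}{Y}$, the intersection $Z:=Z_1\cap Z_2$ is still set-theoretically definable, still contains $\fig{C}{\{x\}}$, and satisfies $\fig{C}{X\cup Y}\cap Z = (\fig{C}{X}\cap Z)\cup(\fig{C}{Y}\cap Z)=\emptyset$, so $Z$ witnesses $\Sep{C}{\{x\}}{X\cup Y}$. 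I expect this merging-of-witnesses argument in (5) to be the main, though still routine, obstacle; everything else is a direct unwinding of definitions against Theorem \ref{basic}.
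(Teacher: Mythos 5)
Your proof is correct, and since the paper states this theorem only as a citation of Vop\v{e}nka \cite{ast} without reproducing a proof, there is nothing in the paper to diverge from: your direct verification — exploiting that $\Sep{C}{\{x\}}{X}$ depends only on $\fig{C}{\{x\}}=\mon{C}{x}$ and $\fig{C}{X}$ for clauses (1)--(2), reflexivity of $\doteq_C$ for (3), contraposition for (4), and intersecting the two separating witnesses for the nontrivial inclusion in (5) — is exactly the standard argument. All steps check out against the definitions and Theorem \ref{basic}.
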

\noindent
It is also helpful to remind the next theorem.
\begin{thm}[the 10th theorem at the Section 2 of Chapter 3 of Vop\v{e}nka \cite{ast}]\label{closure}
Let $X$ be a figure of $\mathscr{C}$.
Then the following statements are equivalent:
\begin{enumerate}[(1)]
\item $X$ is the figure of a set $u$.
\item $X$ is a $\pi$-class.
\item $C\setminus X$ is a $\sigma$-class.
\item $X=\cl{C}{X}$.
\item $C\setminus X=\intrr{C}{C\setminus X}$.
\end{enumerate}
\end{thm}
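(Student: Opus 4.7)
The plan is to dispatch two easy dualities, prove the direct implications from (1) using a generating sequence of $\doteq_C$, and close the loop via the compactness clause of the indiscernibility equivalence. For the dualities: since $X\subseteq C$ with $C$ set-theoretically definable, (2)$\Leftrightarrow$(3) is formal, because by Theorem \ref{pi-sigma}(1) writing $X=\bigcap_n R'_n$ in descending form is equivalent to writing $C\setminus X=\bigcup_n(C\setminus R'_n)$ in ascending form. The equivalence (4)$\Leftrightarrow$(5) is the identity $\cl{C}{X}=C\setminus\intrr{C}{C\setminus X}$ derived in the paragraph immediately preceding the theorem.

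For (1)$\Rightarrow$(2), I fix a generating sequence $\{R_n\}$ of $\doteq_C$ (so reflexivity and the composition axiom yield $R_{n+1}\subseteq R_n$, and $\doteq_C=\bigcap_n R_n$) and write $X=\fig{C}{u}$. Define $S_n=\{y\in C:(\exists x\in u)(\langle x,y\rangle\in R_n)\}$; each $S_n$ is set-theoretically definable, and $X\subseteq\bigcap_n S_n$ is immediate. For the reverse, given $y\in\bigcap_n S_n$, the sets $b_n=\{x\in u:\langle x,y\rangle\in R_n\}$ form a descending chain of non-empty subsets of the set $u$, and AST's prolongation principle together with overspill produces a common element $x\in\bigcap_n b_n$, giving $x\doteq_C y$ and thus $y\in X$. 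For (1)$\Rightarrow$(4), I take $y\in\cl{C}{X}$; the set-theoretically definable $Z_n=\{z:\langle y,z\rangle\in R_{n+1}\}$ contains $\mon{C}{y}$, so $\neg\Sep{C}{\{y\}}{X}$ forces some $w_n\in Z_n\cap X$, and then some $x_n\in u$ with $\langle x_n,w_n\rangle,\langle w_n,y\rangle\in R_{n+1}$, whence $\langle x_n,y\rangle\in R_n$ by composition. Rerunning the descending-chain extraction inside $u$ yields $x\in u$ with $x\doteq_C y$, so $y\in X$, proving $\cl{C}{X}\subseteq X$; the reverse inclusion is Theorem \ref{clo}(3).

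The main obstacle is the remaining arrow (4)$\Rightarrow$(1), which genuinely requires the compactness clause in the definition of an indiscernibility equivalence. My plan is to use (3) to write $X=\bigcap_n P_n$ for descending set-theoretically definable $P_n$, then build at each level $n\in\FN$ a set $u_n\subseteq X$ that $R_n$-covers $X$; this is possible because any $R_n$-separated subclass of $X$ is in particular $\doteq_C$-separated and hence a finite set by compactness, so a maximal $R_n$-separated subfamily of $X$ is a finite set whose $R_n$-figure already engulfs $X$. I then prolong the $\FN$-sequence $\{u_n\}$ to a value $u=u_\nu$ at some $\nu\in N\setminus\FN$ and claim $\fig{C}{u}=X$. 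The inclusion $X\subseteq\fig{C}{u}$ will come from the $R_n$-covering property at each finite $n$ combined with the descending-chain extraction used above; the reverse inclusion invokes (4), since any $x\in u$ is $R_n$-close to $X$ for every $n\in\FN$, hence cannot be separated from $X$ and therefore lies in $\cl{C}{X}=X$, after which the figure condition absorbs $\mon{C}{x}$. The delicate point, where I expect most of the work, is arranging the prolongation so that $u_\nu$ genuinely retains the $R_n$-covering property for every $n\in\FN$ while remaining within (or close enough to) $X$ — this is the step where both compactness of $\doteq_C$ and the closure hypothesis (4) are essential rather than cosmetic.
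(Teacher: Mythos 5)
The paper states this result only as a quotation from Vop\v{e}nka \cite{ast} and supplies no proof of its own, so your proposal has to be judged on its own terms; judged so, it has two genuine gaps. First, the implication graph you build does not close. You establish (1)$\Rightarrow$(2)$\Leftrightarrow$(3), (4)$\Leftrightarrow$(5), (1)$\Rightarrow$(4), and sketch ``(4)$\Rightarrow$(1)''; no arrow leads from (2) or (3) back into the component $\{(1),(4),(5)\}$, so the five-way equivalence is not established --- (2) and (3) could a priori be strictly weaker than the rest. Worse, your sketch of (4)$\Rightarrow$(1) opens by ``using (3) to write $X=\bigcap_n P_n$,'' and its final inclusion ``invokes (4),'' so what you are actually arguing is (3)$\wedge$(4)$\Rightarrow$(1); that is circular as a proof of (4)$\Rightarrow$(1) and still leaves (2)/(3) dangling. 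You need some arrow such as (2)$\Rightarrow$(4) or (3)$\Rightarrow$(1) whose hypotheses you have already reached.

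Second, the mathematical core of the hard direction is precisely the step you defer. Producing, for each $n\in\FN$, a finite maximal $R_n$-net $u_n\subseteq X$ is fine (Theorem \ref{rnet} gives the finiteness, since each $R_n$ is an upper bound of $\doteq_C$; note the bare compactness clause only speaks of infinite \emph{sets}, so the appeal must go through that theorem). But the prolongation step is not a routine afterthought: the property ``$u_m\subseteq X$ and $u_m$ $R_n$-covers $X$'' is not a set formula in $m$, because $X$ is in general not set-theoretically definable (it is typically a proper $\pi$-class or semiset), so it cannot be overspilled to an infinite index $\nu$ as written. One must replace $X$ by set-theoretically definable approximations on both sides of the covering condition, overspill the resulting set formula, and then show that the error thereby introduced is absorbed by $\cl{C}{X}=X$ on one side and by the descending-chain extraction on the other. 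That is where all of the work lies, and you explicitly leave it undone (``this is the step where \dots\ I expect most of the work''). The parts you do complete --- (2)$\Leftrightarrow$(3), (4)$\Leftrightarrow$(5), (1)$\Rightarrow$(2) via $\fig{C}{u}=\bigcap_n S_n$ with the nonempty descending chain $b_n\subseteq u$, and (1)$\Rightarrow$(4) via the composition property of the generating sequence --- are correct, but the theorem is not yet proved.
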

\noindent
A figure $X$ is \textit{closed} in $\mathscr{C}$ iff it has one (and therefore all) of the properties of Theorem \ref{closure}.
Contrastively, a class $X$ is \textit{open} in $\mathscr{C}$ iff $C\setminus X$ is a closed figure in $\mathscr{C}$.

It is worth mentioning that not only closed classes but also open classes are figures.
In general, if $X$ is a figure, so too $C\setminus X$ is.
Suppose that if it is not, then for some element $y\in C\setminus X$, there exists $x\in\mon{C}{y}\cap X$.
Since $X$ is a figure, $y$ must be an element of $X$ just because $y\in\mon{C}{x}\subseteq X$.
It is a contradiction.
Open classes are also figures by the same token, since closed classes are figures.

A figure $X$ is said to be \textit{clopen} in $\mathscr{C}$ iff it has one (thus, both) of the properties of Theorem \ref{closed}.
\begin{thm}[the 6th Theorem at the Section 3 of Chapter 3 of Vop\v{e}nka \cite{ast}]\label{closed}
  For each figure $X$ in $\mathscr{C}$, the following  are equivalent
  \begin{enumerate}[(1)]
  \item $X$ is set-theoretically definable
  \item $X$ and $C\setminus X$ are closed figures.
  \end{enumerate}
\end{thm}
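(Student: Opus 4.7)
The plan is to prove both directions, with the first being immediate and the second carrying the real content.

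For $(1)\Rightarrow(2)$, I would simply note that a set-theoretically definable class is trivially a $\pi$-class (a constant countable sequence serves as a descending representation). Since $X$ is a figure and a $\pi$-class, Theorem \ref{closure} gives that $X$ is closed. The complement $C\setminus X$ is likewise set-theoretically definable (because $C$ is) and is a figure by the observation made just before this theorem, so the same reasoning yields that $C\setminus X$ is closed.

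For $(2)\Rightarrow(1)$, I plan to invoke Theorem \ref{closure}(1) to extract sets $u$, $v$ with $X=\fig{C}{u}$ and $C\setminus X=\fig{C}{v}$. The disjointness $\fig{C}{u}\cap\fig{C}{v}=\emptyset$ translates, via Theorem \ref{basic}(5), into $\fig{C}{u}\cap v=\emptyset$, and therefore $(u\times v)\cap \doteq_C=\emptyset$. Fixing a generating sequence $\{R_n:n\in\FN\}$ of $\doteq_C$, the heart of the proof is to locate some $n\in\FN$ for which $(u\times v)\cap R_n=\emptyset$ already. Granting this, the set-theoretically definable class
\[
Z=\{z\in C:(\exists x\in u)\,\langle x,z\rangle\in R_{n+1}\}
\]
contains $\fig{C}{u}$ (since $\doteq_C\subseteq R_{n+1}$), while a hypothetical $z\in Z\cap\fig{C}{v}$ would supply $x\in u$, $y\in v$ with $\langle x,z\rangle,\langle z,y\rangle\in R_{n+1}$, and clause (2) of the generating sequence would then force $\langle x,y\rangle\in R_n$ against the choice of $n$. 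Hence $X\subseteq Z\subseteq C\setminus\fig{C}{v}=X$, so $X=Z$ is set-theoretically definable.

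I expect the only genuine obstacle to be the existence of such an $n$, which I plan to obtain by contradiction using AST's prolongation axiom. If every $(u\times v)\cap R_n$ were non-empty, a choice of witnesses $(x_n,y_n)\in(u\times v)\cap R_n$ gives a countable function into the set $u\times v$; prolongation extends it, together with the generating sequence itself, to set-functions on some $M\supseteq\FN$. Since $\FN$ is an initial segment of $N$, overspill furnishes some $\nu\in M\setminus\FN$ at which the descending behaviour of $R$ and the membership $\langle x_\nu,y_\nu\rangle\in R_\nu$ persist; thus $\langle x_\nu,y_\nu\rangle\in R_n$ for every $n\in\FN$ and so $x_\nu\doteq_C y_\nu$ with $x_\nu\in u$ and $y_\nu\in v$, contradicting $(u\times v)\cap\doteq_C=\emptyset$. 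Morally, the theorem rests on a normality principle for continua: sets with disjoint figures can be separated by a set-theoretically definable class.
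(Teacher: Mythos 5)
The paper does not actually prove this statement: it is quoted from Vop\v{e}nka's book as a known result, with no argument supplied, so there is no in-paper proof to compare against. On its own terms your proof is correct. The $(1)\Rightarrow(2)$ direction via the constant $\pi$-sequence and the closure theorem is fine, and in $(2)\Rightarrow(1)$ the separation class $Z$ built from $R_{n+1}$ together with clause (2) of the definition of a generating sequence does exactly what you claim, yielding $X=Z$ set-theoretically definable. The one place to be slightly careful is the overspill step: the classes $R_n$ are set-theoretically definable but need not be sets, so you cannot prolong ``the generating sequence itself''; you must first restrict to $u\times v$, so that the descending sequence $\left(\left(u\times v\right)\cap R_n\right)_{n\in\FN}$ consists of sets, and then invoke the standard AST fact (itself proved by prolongation) that a countable descending sequence of nonempty sets has nonempty intersection. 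With that repair your contradiction with $(u\times v)\cap\doteq_C=\emptyset$ goes through. For comparison, Vop\v{e}nka's own route is shorter: by Theorem \ref{closure}, closedness of $X$ makes $X$ a $\pi$-class and closedness of $C\setminus X$ makes $X$ a $\sigma$-class, and a class that is simultaneously a $\pi$-class and a $\sigma$-class is set-theoretically definable (a Chapter 2 result not restated in this paper). Your argument buys self-containedness at the cost of redoing, in the special case of disjoint figures of sets, the normality/separation principle that underlies that Chapter 2 lemma.
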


The equivalence $\doteq_\mathscr{C}$ is said to be \textit{totally disconnected} iff it has a generating sequence $(S_i)_{i\in\FN}$ such that $S_i$ is an equivalence for each $i$.
\begin{thm}[The 10th theorem at the Section 3 of Chapter 3 of Vop\v{e}nka \cite{ast}]
  The following are equivalent:
  \begin{enumerate}[(1)]
  \item $\doteq_C$ is set-theoretically definable.
  \item $\mon{C}{x}$ is a clopen figure for each $x\in C$.
  \item The class $V/\doteq_C$ is finite.
  \end{enumerate}
  \end{thm}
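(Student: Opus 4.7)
The plan is to close the cycle $(1) \Rightarrow (2) \Rightarrow (3) \Rightarrow (1)$, handling the two short arcs first and reserving the real work for $(2) \Rightarrow (3)$. For $(1) \Rightarrow (2)$: if $\doteq_C$ is set-theoretically definable, then so is each $\mon{C}{x} = \{y \in C : y \doteq_C x\}$, and Theorem \ref{closed} immediately yields that it is a clopen figure. For $(3) \Rightarrow (1)$: write $V/\doteq_C = \{M_1, \dots, M_k\}$ with each $M_i$ a monad of any of its elements, so each $M_i$ is a $\pi$-class as the intersection along a generating sequence of $\doteq_C$; by Theorem \ref{pi-sigma}(2) the complement $C \setminus M_i = \bigcup_{j \ne i} M_j$, being a finite union of $\pi$-classes, is again $\pi$. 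So both $M_i$ and $C \setminus M_i$ are closed figures (Theorem \ref{closure}), making $M_i$ clopen and hence set-theoretically definable by Theorem \ref{closed}; then $\doteq_C = \bigcup_{i \le k} M_i \times M_i$ is a finite union of products of set-theoretically definable classes, so is itself set-theoretically definable.

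For $(2) \Rightarrow (3)$, I would argue by contradiction using the compactness built into the notion of indiscernibility equivalence. Suppose each monad is clopen but $V/\doteq_C$ is infinite. The goal is to build an infinite set $u \subseteq C$ whose elements lie in pairwise distinct equivalence classes; once produced, compactness forces some $x \ne y$ in $u$ with $x \doteq_C y$, which is absurd. The natural construction is recursive: pick $x_0 \in C$ arbitrarily and, at stage $n$, pick $x_{n+1} \in C \setminus \bigcup_{i \le n} \mon{C}{x_i}$, which is nonempty precisely because $V/\doteq_C$ is infinite.

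The hard part is packaging this countable recursion, which a priori only defines a class function on $\FN$, into a genuine set of pairwise inequivalent representatives on which compactness can bite. Here I would invoke the AST prolongation principle to extend the sequence past $\FN$ into an initial segment of $N$, obtaining a set whose elements remain pairwise $\doteq_C$-inequivalent, the pairwise-inequivalence clause being expressed by a set formula and hence preserved under prolongation. Compactness applied to this set delivers the desired contradiction. I expect this prolongation step to be the only subtle point, since every other piece of the argument is a direct appeal to the theorems already on the page.
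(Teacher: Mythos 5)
The paper states this theorem purely as an import from Vop\v{e}nka and supplies no proof of its own, so your argument can only be judged on its merits. Your arcs $(1)\Rightarrow(2)$ and $(3)\Rightarrow(1)$ are sound: a set-theoretically definable monad is a clopen figure by Theorem \ref{closed}, and when there are finitely many classes each class $M_i$ is a $\pi$-class figure whose complement is a finite union of $\pi$-class figures, hence both are closed by Theorem \ref{closure}, so $M_i$ is clopen and therefore set-theoretically definable, and $\doteq_C=\bigcup_{i\le k}M_i\times M_i$ is then set-theoretically definable as a finite union of products.

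The arc $(2)\Rightarrow(3)$ has a genuine gap at exactly the point you flag. Pairwise $\doteq_C$-inequivalence is \emph{not} a set formula: $x\not\doteq_C y$ means $(\exists n\in\FN)(\langle x,y\rangle\notin R_n)$, a $\sigma$-condition, and $\doteq_C$ itself is set-theoretically definable only if (1) already holds --- which is what the cycle is trying to establish. So the prolongation of your countable selection $(x_n)_{n\in\FN}$ to a set need not preserve pairwise inequivalence, and compactness has nothing to bite on. The failure is not cosmetic: your argument for $(2)\Rightarrow(3)$ never actually uses the hypothesis that the monads are clopen, so if it worked it would show that \emph{every} continuum has only finitely many monads. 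The real continuum $\mathscr{R}=\langle Q,\doteq\rangle$ refutes this: the positions $x_n=n$ for $n\in\FN$ are pairwise inequivalent, yet any set prolongation of this sequence is an infinite set and so, by the very definition of compactness, contains two distinct indiscernible elements (concretely, two infinite naturals whose distance exceeds $2^n$ for every $n\in\FN$, which lie in a common $R_n$ for all $n$). A correct proof must exploit clopenness essentially: for instance, a clopen monad is a set-theoretically definable (hence $\sigma$-) class equal to the descending intersection $\bigcap_{n}\left(Z_n(x)\cap C\right)$ of set-theoretically definable classes, so by the standard $\pi$-versus-$\sigma$ overspill consequence of prolongation it already equals $Z_{n(x)}(x)\cap C$ at some finite stage $n(x)$; one then needs a uniform bound on $n(x)$ over $C$ and an appeal to Theorem \ref{rnet}, which bounds the size of an $R_n$-net and hence the number of classes. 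As written, the hard implication is not established.
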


Now, it is possible to construct a topological space out of a continuum $\mathscr{C}$.
Let us check the claim with the next theorem.

\begin{thm}
  Given a continuum $\mathscr{C}=\langle C,\doteq_\mathscr{C}\rangle$ in which $C$ is clopen, the space $\langle C,\mathscr{O}\rangle$ is topological where $\mathscr{O}$ is given as
  \[
  \mathscr{O}\ = \ \{\intrr{C}{X}\,:\, X\subseteq C\}
  \]
\end{thm}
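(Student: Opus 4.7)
The plan is to verify directly the three axioms of a topological space for $\mathscr{O}$: (i) that $\emptyset,C\in\mathscr{O}$, (ii) closure under finite intersection, and (iii) closure under arbitrary union. The hypothesis that $C$ is clopen is used via Theorem~\ref{closed} to ensure $C$ itself is set-theoretically definable, so $Z=C$ is available as a legitimate separator. For (i), I would compute from the separation definition that $\cl{C}{\emptyset}=\emptyset$ (taking $Z=C$ for each $x$) and $\cl{C}{C}=C$ (no set-theoretically definable $Z$ can both meet $\mon{C}{x}\subseteq C$ and be disjoint from $\fig{C}{C}=C$), so $\intrr{C}{C}=C$ and $\intrr{C}{\emptyset}=\emptyset$. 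For (ii), a De~Morgan calculation combined with Theorem~\ref{clo}(5) gives
\[
\intrr{C}{X}\cap\intrr{C}{Y} = C\setminus\bigl(\cl{C}{C\setminus X}\cup\cl{C}{C\setminus Y}\bigr) = C\setminus\cl{C}{C\setminus(X\cap Y)} = \intrr{C}{X\cap Y},
\]
so the intersection lies in $\mathscr{O}$.

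For (iii), given a family $\{X_\alpha\}$, I would set $U=\bigcup_\alpha\intrr{C}{X_\alpha}$ and claim $U=\intrr{C}{U}$, which puts $U\in\mathscr{O}$ with witness $X=U$. The inclusion $\intrr{C}{U}\subseteq U$ is automatic since $\intrr{C}{Y}\subseteq Y$ whenever $Y\subseteq C$. The reverse inclusion reduces, by monotonicity of $\intrr{C}{\cdot}$, to the idempotence $\intrr{C}{\intrr{C}{X_\alpha}}=\intrr{C}{X_\alpha}$: from it one deduces $\intrr{C}{X_\alpha}\subseteq\intrr{C}{U}$ for each $\alpha$, and taking the union yields $U\subseteq\intrr{C}{U}$.

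The main obstacle is this idempotence, equivalently the claim $\cl{C}{\cl{C}{Y}}\subseteq\cl{C}{Y}$ (the reverse being Theorem~\ref{clo}(3)). I would argue by contrapositive: if $x\notin\cl{C}{Y}$, fix a set-theoretically definable witness $Z$ with $\mon{C}{x}\subseteq Z$ and $\fig{C}{Y}\cap Z=\emptyset$, and fix a generating sequence $(R_n)_{n\in\FN}$ of $\doteq_C$ so that $\mon{C}{x}=\bigcap_n R_n[\{x\}]$. The AST prolongation principle then supplies an $N$ with $R_N[\{x\}]\subseteq Z$, and the triangle-like condition $R_{N+1}\circ R_{N+1}\subseteq R_N$ from the definition of a generating sequence yields $\mon{C}{y}\subseteq R_N[\{x\}]\subseteq Z$ for every $y\in R_{N+1}[\{x\}]$; hence the same $Z$ witnesses $\Sep{C}{\{y\}}{Y}$, so $y\notin\cl{C}{Y}$. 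Thus $R_{N+1}[\{x\}]$ is a set-theoretically definable separator of $\{x\}$ from $\cl{C}{Y}$, giving $x\notin\cl{C}{\cl{C}{Y}}$ as required.
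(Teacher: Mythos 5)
Your proof is correct, and it takes a genuinely different route from the paper's. The paper argues through the $\sigma$/$\pi$-class calculus: it uses Theorem \ref{closure} to write every member of $\mathscr{O}$ as $C\setminus\fig{C}{u}$ for a set $u$, handles binary intersections via $\fig{C}{u}\cup\fig{C}{v}=\fig{C}{u\cup v}$, and handles (only countable) unions via the fact that a countable intersection of $\pi$-classes is a $\pi$-class (Theorem \ref{pi-sigma}), reading off openness from Theorem \ref{closure} each time. You instead work Kuratowski-style with the operators $\cl{C}{\cdot}$ and $\intrr{C}{\cdot}$: binary intersections come from additivity of closure, $\cl{C}{X\cup Y}=\cl{C}{X}\cup\cl{C}{Y}$, by De Morgan, and unions are reduced to idempotence of the interior, which you establish by refining a separating class $Z$ for $\{x\}$ and $Y$ to the set-theoretically definable class $R_{N+1}[\{x\}]$ using the condition that $\langle x,y\rangle,\langle y,z\rangle\in R_{N+1}$ implies $\langle x,z\rangle\in R_N$ from the definition of a generating sequence. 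Your version buys two things: it covers arbitrary (not merely countable) unions, and it makes explicit the idempotence $\cl{C}{\cl{C}{Y}}=\cl{C}{Y}$, which the paper's appeal to Theorem \ref{closure} --- to get $C\setminus\intrr{C}{X'}=\fig{C}{u}$ for a set $u$ --- tacitly presupposes, since one must know that $\cl{C}{M}$ is itself a closed figure before Theorem \ref{closure} applies to it. The paper's version, in exchange, is shorter and stays entirely inside results it has already quoted from Vop\v{e}nka. The only step of yours that leans on an unstated principle is the overspill inference that $\mon{C}{x}=\bigcap_{n}R_n[\{x\}]\subseteq Z$ forces $R_N[\{x\}]\subseteq Z$ for some $N\in\FN$; this is legitimate here, as the paper uses the identical inference in the proof of Theorem \ref{acc}.
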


\begin{proof}
  It is evident that $C\in\mathscr{O}$ and $\emptyset\in\mathscr{O}$.

  Let $X,Y\in\mathscr{O}$.
  Then, by Theorem \ref{closure}. there exist $x,y\subseteq C$ which satisfy
  \[
  X\ =\ C\setminus\fig{C}{x}\quad\text{ and }\quad Y\ =\ C\setminus\fig{C}{y}
  \]
  Then, by (4) of Theorem \ref{basic},
  \[
  X\cap Y\ =\ C\setminus\left(\fig{C}{x}\cup\fig{C}{y}\right) \ =\ C\setminus\fig{C}{x\cup y}.
  \]
  Since $\fig{C}{x\cup y}$ is a $\pi$-class, $C\setminus\fig{C}{x\cup y}$ is $\sigma$-class,
  so that $C\setminus\fig{C}{x\cup y}=X\cap Y=\intrr{C}{X\cap Y}$ by (3) and (5) of Theorem \ref{closure}, and thus, $X\cap Y\in\mathscr{O}$.

  Lastly, let $(C_i)_{i\in\FN}$ be a sequence of elements of $\mathscr{O}$. Then, for each $i$ there exists a set $c_i\subseteq C$ which satisfies
  \[
  C_i\ =\ C\setminus \fig{C}{c_i}.
  \]
  Then, the following equation holds.
  \[
  \bigcup_{i\in \FN}C_i \ =\ C\setminus\bigcap_{i\in\FN}\fig{C}{ c_i}
  \]
  Since $\bigcap_{i\in\FN}\fig{C}{ c_i}$ is a $\pi$-class by (3) of Theorem \ref{pi-sigma},
  $C\setminus\bigcap_{i\in\FN}\fig{C}{ c_i}$ is $\sigma$-class, so that $C\setminus\bigcap_{i\in\FN}\fig{C}{ c_i} =\bigcup_{i\in \FN}C_i\in\mathscr{O}$ by exactly the same reasoning as the previous case.
\end{proof}

\section{Points and Sequences}

A monad $\mon{C}{x}$ is said to be a \textit{point} of $\mathscr{C}$ and $x$ its \textit{position}.
A position is not uniquely determined since each point contains multiple different elements in general, and each elements are equally qualified to be its position.

For any subclass $A$  of $C$, a point $\mon{C}{x}$ is said to be an \textit{accumulation point} of $A$ iff it satisfies that
\[
x\in \cl{C}{A\setminus \mon{C}{x}}\quad\text{or}\quad
\mon{C}{x}\subseteq \cl{C}{A\setminus\mon{C}{x}}.
\]
When it is not but the point of $A$, it is said to be an \textit{isolation point} of $A$.

Let $\left(\mon{C}{a_i}\right)_{i\in\tau}$ and $(a_i)_{i\in\tau}$ be sequences of points and their positions of $\mathscr{C}$ respectively, and $\left(R_i\right)_{i\in\FN}$ be a generating sequence of $\doteq_\mathscr{C}$.
A sequence $(\mon{C}{a_i})_{i\in\FN}$ of $C$ is said to \textit{converge to} a point $\mon{C}{x}$ iff
\[
\left(
\forall k\in\FN
\right)
\left(
\exists i\in\FN
\right)
\left(
\forall j>i
\right)
\left(
\langle a_j,x\rangle\in R_k
\right).
\]
It is simply denoted as $\lim_{i\in\FN} \mon{C}{a_i}\ = \ \mon{C}{x}$ if it converges to $\mon{C}{x}$.

Let $Z_n(a)$ denote an \textit{image} of $a\in C$ with $R_n$, defined as
  \[
  Z_n(a)\ \equiv\ \left\{
  x\in C: \langle a,x\rangle\in R_n
  \right\}.
  \]
  Then, the next theorem holds.

\begin{thm}\label{acc}
  Let $\doteq_C$ be a non totally disconnected indiscernibility equivalence and $A\subseteq C$.
  A point $\mon{C}{a}$ is an accumulation point of $A$ iff it is a limit of some converging sequence $\left(\mon{C}{a_i}\right)_{i\in\FN}$ consisting only of mutually different points of $A$.
\end{thm}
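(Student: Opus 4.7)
The plan is to prove the two implications separately, both built on the observation that the generating sequence $(R_n)_{n\in\FN}$ of $\doteq_\mathscr{C}$ supplies a countable family of set-theoretically definable ``neighborhoods'' $Z_n(a)$ with $\mon{C}{a}=\bigcap_{n\in\FN}Z_n(a)$ and $R_{n+1}\subseteq R_n$. These will be used to witness non-separation on one side and to drive convergence on the other.

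For the direction $(\Rightarrow)$, I would read $a\in\cl{C}{A\setminus\mon{C}{a}}$ as the statement that no set-theoretically definable class containing $\mon{C}{a}$ is disjoint from $\fig{C}{A\setminus\mon{C}{a}}$. Testing this with $Z=Z_{n+1}(a)$ produces, for each $n\in\FN$, some $y\in Z_{n+1}(a)$ together with some $b_n\in A\setminus\mon{C}{a}$ with $b_n\doteq_C y$; axiom~(2) of the generating sequence then yields $\langle a,b_n\rangle\in R_n$. The remaining task is to extract an infinite subsequence of mutually non-equivalent $b_n$. I would do this by contradiction: if only finitely many distinct monads appeared among the $\mon{C}{b_n}$, some single $\mon{C}{b}$ would contain $b_n$ for cofinally many $n$, and combining $\langle a,b_n\rangle\in R_n$ with $\langle b_n,b\rangle\in\,\doteq_C\,\subseteq R_n$ via axiom~(2) would force $\langle a,b\rangle\in R_m$ for every $m\in\FN$, hence $a\doteq_C b$, contradicting $b_n\notin\mon{C}{a}$. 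Reindexing the resulting subsequence as $(a_i)_{i\in\FN}$, the nesting $R_{n+1}\subseteq R_n$ makes convergence $\mon{C}{a_i}\to\mon{C}{a}$ immediate.

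For $(\Leftarrow)$ I would assume such a sequence exists and argue by contradiction: pick a set-theoretically definable $Z\supseteq\mon{C}{a}$ with $Z\cap\fig{C}{A\setminus\mon{C}{a}}=\emptyset$. Pairwise distinctness of the $\mon{C}{a_i}$ forces all but at most one $a_i$ into $A\setminus\mon{C}{a}$, hence outside $Z$. On the other hand, convergence should put $a_i$ inside $Z$ eventually --- which would close out the contradiction. The main obstacle is exactly this last step: it requires the local-basis fact that every set-theoretically definable $Z\supseteq\mon{C}{a}$ already contains some $Z_k(a)$, converting the countable emptiness $\bigcap_{n\in\FN}(Z_n(a)\setminus Z)=\emptyset$ into $Z_k(a)\setminus Z=\emptyset$ for a single $k\in\FN$. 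This compactness-flavoured principle for $\pi$-classes against set-theoretically definable ones is where I expect the real work to lie, and where the standing hypothesis that $\doteq_C$ is \emph{not} totally disconnected should come into play, ruling out the degenerate configurations in which monads behave like isolated clopen pieces and no genuinely convergent sequence of distinct monads can even exist. Once that lemma is in hand, $a_i\in Z_k(a)\subseteq Z$ for large $i$ contradicts $a_i\notin Z$, completing the proof.
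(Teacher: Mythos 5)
Your proposal is correct and follows essentially the same route as the paper: both directions run through the images $Z_n(a)$ with $\mon{C}{a}=\bigcap_{n\in\FN}Z_n(a)$, and the paper's argument for the direction ``sequence $\Rightarrow$ accumulation point'' is exactly your contradiction with a separating $Z$. Two clarifications. The ``local-basis'' lemma you flag as the real work --- that a set-theoretically definable $Z\supseteq\mon{C}{a}$ already contains some $Z_k(a)$ --- is asserted without proof in the paper as well; it is an instance of the prolongation-type fact that a descending countable sequence of nonempty set-theoretically definable classes has nonempty intersection (apply it to the classes $Z_n(a)\setminus Z$), and it does not require the non-total-disconnectedness hypothesis. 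The paper instead invokes that hypothesis in the other direction, to guarantee $Z_n(a)\supsetneq\mon{C}{a}$, and it secures mutual distinctness of the points not by your pigeonhole extraction but by recursively choosing $n_{i+1}>n_i$ with $Z_{n_{i+1}}(a)\cap\mon{C}{a_i}=\emptyset$; both devices work.
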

\begin{proof}
  Let   $(R_i)_{i\in\FN}$ be a generating sequence of $\doteq_\mathscr{C}$ and $Z_n(a)$ be an image of $a$.
  Since $\doteq_C$ is not totally disconnected, every element $R_n$ of its generating sequence is strictly bigger than $\doteq_C$, so that $Z_n(a)$ is also strictly bigger than $\mon{C}{a}$ for all $n\in\FN$.

  Suppose there exists a set-theoretically definable class $Z$ which separates $\mon{C}{a}$ and $A\setminus\mon{C}{a}$, that is, which satisfies $\mon{C}{a}\subseteq Z$ and $Z\cap A\setminus\mon{C}{a}=\emptyset$.
  Since $\mon{C}{a}=\bigcap_{i\in\FN} Z_i(a)$, there exists $j\in\FN$ such that $Z_k(a)\subseteq Z$ for all $k>j$.
  Since $\left(\mon{C}{a_i}\right)_{i\in\FN}$ converges to $\mon{C}{a}$, there exists $a_i\in Z_k(a)\setminus\mon{C}{a}$ for every $k>j$.
  It means $Z\cap A\setminus\mon{C}{a}\ne\emptyset$, since $a_i\in A\setminus\mon{C}{a}$. It is a contradiction.

  Conversely, suppose $\mon{C}{a}$ is an accumulation point of $A$.
  Then, it holds that $Z_n(a)\cap A\setminus\mon{C}{a}\ne\emptyset$ for all $n\in\FN$.
  Since $\doteq_C$ is not totally disconnected,
  there exists $a_0\in Z_{n_0}(a)$ which satisfies $a_0\not\doteq a$.
  It is also evident that there exists $n_1\in\FN$ which satisfies $n_1>n_0$ and $Z_{n_1}(a)\cap\mon{C}{a_0}=\emptyset$, otherwise $a_0$ must be indiscernible with $a$ by definition of $\doteq_C$,
  and that there exists $a_1\in Z_{n_1}(a)$ which satisfies $a_1\not\doteq a$.
  By repeating it countably many times, we can get a sequence of points of $\mathscr{C}$ as $\left(\mon{C}{a_i}\right)_{i\in\FN}$ which converges to $\mon{C}{a}$ and its elements are mutually different since each pair of positions $a_i$ and $a_{i+1}$ is separated by set-theoretically definable class $Z_{n_{i+1}}(a)$.
\end{proof}

This theorem guarantees the well-known property of closed classes as shown in the next theorem.
\begin{thm}\label{clo}
Every converging sequence of $A$ has its limits in $A$ iff $A$ is closed.
\end{thm}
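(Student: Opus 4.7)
The plan is to prove both implications via the characterization $A=\cl{C}{A}$ of closed figures (Theorem \ref{closure}(4)), working directly from the separation definition of the closure. Throughout, fix a generating sequence $(R_n)_{n\in\FN}$ of $\doteq_\mathscr{C}$, so that the descending set-theoretically definable images $Z_n(x)$ satisfy $\mon{C}{x}=\bigcap_{n\in\FN}Z_n(x)$.

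For the direction ``$A$ closed implies limits stay in $A$'', suppose $(\mon{C}{a_i})_{i\in\FN}$ is a sequence in $A$ with limit $\mon{C}{x}$. I will show $\neg\Sep{C}{\{x\}}{A}$, whence $x\in\cl{C}{A}=A$ and $\mon{C}{x}\subseteq A$ because $A$ is a figure. Assume for contradiction that some set-theoretically definable $Z$ satisfies $\mon{C}{x}\subseteq Z$ and $\fig{C}{A}\cap Z=\emptyset$. The same overspill step already invoked in the proof of Theorem \ref{acc}---a $\pi$-class sitting inside a set-theoretically definable class is swallowed by one member of its generating intersection---produces $n$ with $Z_n(x)\subseteq Z$. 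Convergence then supplies $j$ with $a_j\in Z_n(x)\subseteq Z$, while $a_j\in A\subseteq\fig{C}{A}$, contradicting the separation.

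For the converse I argue by contrapositive. If $A$ is not closed, Theorem \ref{closure} gives $x\in\cl{C}{A}\setminus A$, and since $A$ is a figure, $\mon{C}{x}\cap A=\emptyset$. For each $n\in\FN$ the class $Z_{n+1}(x)$ is set-theoretically definable and contains $\mon{C}{x}$, so it cannot witness $\Sep{C}{\{x\}}{A}$; pick $y\in\fig{C}{A}\cap Z_{n+1}(x)$ and $a_n\in A$ with $a_n\doteq_\mathscr{C} y$. Applying condition~(2) of the generating sequence to $\langle x,y\rangle,\langle y,a_n\rangle\in R_{n+1}$ yields $\langle x,a_n\rangle\in R_n$, i.e.\ $a_n\in Z_n(x)\cap A$. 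The sequence $(\mon{C}{a_n})_{n\in\FN}$ then lies in $A$ and converges to $\mon{C}{x}$, so by hypothesis its limit belongs to $A$, forcing $x\in A$---contradiction.

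The delicate point is the index shift in the converse, where a \emph{figure}-witness $y\in\fig{C}{A}\cap Z_{n+1}(x)$ must be converted into a \emph{positional} witness $a_n\in A\cap Z_n(x)$ via the transitivity-with-loss condition~(2) of generating sequences; every other ingredient is a black-box statement from the earlier theorems. Note that Theorem \ref{acc} is not invoked directly, since the convergent sequences produced here need not consist of mutually distinct terms, and the present theorem carries no ``non totally disconnected'' assumption.
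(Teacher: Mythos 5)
Your proof is correct, but it takes a genuinely different route from the paper's. The paper proves the forward direction by a case split: if the sequence has a subsequence of mutually distinct points, Theorem \ref{acc} makes the limit an accumulation point of $A$; otherwise the sequence of points is eventually constant and the limit is an isolation point; either way a closed $A$ contains it. The converse is dispatched with the single sentence ``Converse is also true.'' You instead bypass Theorem \ref{acc} entirely and argue straight from the separability definition of $\cl{C}{A}$: in the forward direction the overspill step (a monad $\bigcap_{n}Z_n(x)$ contained in a set-theoretically definable $Z$ forces some $Z_n(x)\subseteq Z$) shows no $Z$ can separate the limit from $A$, and in the converse you explicitly build a sequence in $A$ converging to any $x\in\cl{C}{A}\setminus A$, using condition (2) of generating sequences to convert a witness in $\fig{C}{A}\cap Z_{n+1}(x)$ into a positional witness in $A\cap Z_n(x)$. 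Your approach buys three things: it actually supplies the converse argument the paper omits; it does not require the sequence to have mutually distinct terms; and it avoids importing the ``non totally disconnected'' hypothesis of Theorem \ref{acc}, which the present statement does not carry and on which the paper's own two-line proof silently relies. What the paper's route buys is brevity and a conceptual link between closedness and accumulation points. The one small thing worth making explicit in your write-up is uniqueness of limits (if a sequence converges to both $\mon{C}{x}$ and $\mon{C}{y}$ then $\langle x,y\rangle\in R_k$ for every $k$, so the points coincide), which you use tacitly when you say the hypothesis ``forces $x\in A$'' in the contrapositive.
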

\begin{proof}
  Let $(\mon{C}{a_i})_{i\in\FN}$ be a converging sequence of $A$.
  When it has a subsequence consisting only of mutually different points, the limit is an accumulation point of $A$ by Theorem \ref{acc}.
  Otherwise, the limit is an isolation point.
  Both limits are included in $A$ since $A$ is closed.
  Converse is also true.
\end{proof}

Points of topological structures of AST satisfy the following separability property.

\begin{thm}\label{sep}
Every pair of different points $\mon{C}{x}\ne\mon{C}{y}$ is separable.
\end{thm}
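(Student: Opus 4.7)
The plan is to exploit the generating sequence $(R_n)_{n\in\FN}$ of $\doteq_C$ and take for the separating class one of the set-theoretically definable images $Z_n(x)$ introduced just before Theorem~\ref{acc}. Recall that what needs to be exhibited is a set-theoretically definable $Z$ with $\fig{C}{\mon{C}{x}}\subseteq Z$ (equivalently $\mon{C}{x}\subseteq Z$, since $\mon{C}{x}$ is already a figure) and $Z\cap\mon{C}{y}=\emptyset$.

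Since $\mon{C}{x}\ne\mon{C}{y}$, we have $x\not\doteq_C y$, so because $\doteq_C=\bigcap_{n\in\FN}R_n$ there exists some $n\in\FN$ with $\langle x,y\rangle\notin R_n$. I would then propose $Z=Z_{n+1}(x)$. This is set-theoretically definable because $R_{n+1}$ is. The inclusion $\mon{C}{x}\subseteq Z_{n+1}(x)$ is immediate: if $z\doteq_C x$ then in particular $\langle x,z\rangle\in R_{n+1}$.

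The only step requiring actual work is disjointness $Z_{n+1}(x)\cap\mon{C}{y}=\emptyset$, and this is exactly where property~(2) of a generating sequence does its job. For any $z\in\mon{C}{y}$ one has $\langle z,y\rangle\in R_{n+1}$; if additionally $z\in Z_{n+1}(x)$, then $\langle x,z\rangle\in R_{n+1}$, and the generating sequence property forces $\langle x,y\rangle\in R_n$, contradicting the choice of $n$. Hence no such $z$ exists.

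I expect no real obstacle here: the whole argument is essentially the standard ``halving the radius'' trick, transplanted from the metric setting to the generating sequence of a $\pi$-equivalence. The only point that deserves a line of care is making sure that, since symmetry of $R_{n+1}$ is part of the definition of a generating sequence, the composition argument $\langle x,z\rangle,\langle z,y\rangle\in R_{n+1}\Rightarrow\langle x,y\rangle\in R_n$ is legitimately invoked from the stated axiom, which is phrased with $y$ in the middle.
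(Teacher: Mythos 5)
Your proposal is correct and follows essentially the same route as the paper: pick $n$ with $\langle x,y\rangle\notin R_n$ and use an image $Z_\cdot(x)$ of $x$ under the generating sequence as the separating set-theoretically definable class. In fact your version is slightly more careful than the paper's, which takes $Z=Z_n(x)$ and leaves the index bookkeeping implicit, whereas your shift to $Z_{n+1}(x)$ makes the appeal to property (2) of a generating sequence ($\langle x,z\rangle,\langle z,y\rangle\in R_{n+1}\Rightarrow\langle x,y\rangle\in R_n$) go through verbatim.
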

\begin{proof}
  Since $x\not\doteq y$, there exists $n\in\FN$ which satisfy $\langle x,y\rangle\notin R_n$.
  It implies that  $\mon{C}{x}\subseteq Z_n(x)$ and $Z_n(x)\cap\mon{C}{y}=\emptyset$.
\end{proof}

\section{Neighborhoods}

A class $X\subseteq C$ is a \textit{neighborhood} of a point $\mon{C}{x}$ iff
\[
x\in\intrr{C}{X}.
\]
The open class which contains $x$ is called an \textit{open neighborhood} of $x$.
A \textit{complete system of neighborhoods} $\mathscr{V}(x)$ for a point $\mon{C}{x}$ is the collection of all neighborhoods for the point $x$.

\begin{thm}
  Let $\doteq_C$ be a non totally disconnected indiscernibility equivalence, $\left(\mon{C}{a_i}\right)_{i\in\FN}$ be a converging sequence, and $\mathscr{V}(x)$ be a complete system of neighborhoods.
  A point $\mon{C}{a}$ is a limit of $\left(\mon{C}{a_i}\right)_{i\in\FN}$ iff for each $V\in\mathscr{V}(a)$ there exists $i\in\FN$, $\{a_j:j>i\}\subseteq V.$
\end{thm}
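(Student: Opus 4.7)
The plan is to prove both directions of the equivalence. For the forward direction, assume $\mon{C}{a}=\lim_{i\in\FN}\mon{C}{a_i}$ and pick an arbitrary $V\in\mathscr{V}(a)$, so $a\in\intrr{C}{V}$, i.e.\ $a\notin\cl{C}{C\setminus V}$. Unfolding the definition of closure, $\{a\}$ and $C\setminus V$ are separable: there is a set-theoretically definable class $Z$ with $\mon{C}{a}\subseteq Z$ and $\fig{C}{C\setminus V}\cap Z=\emptyset$; the latter yields $Z\cap(C\setminus V)=\emptyset$, hence $Z\subseteq V$. The crucial step is to extract some $k\in\FN$ with $Z_k(a)\subseteq Z$. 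This uses the identity $\mon{C}{a}=\bigcap_{n\in\FN}Z_n(a)$ and a standard AST overspill/prolongation argument: if every finite $k$ failed, the countable sequence of nonempty set-theoretically definable classes $(Z_n(a)\setminus Z)_{n\in\FN}$ could be prolonged to some $n\in N\setminus\FN$, but for such infinite $n$ one has $R_n\subseteq\,\doteq_C$, so any element of $Z_n(a)\setminus Z$ would lie in $\mon{C}{a}\setminus Z$, contradicting $\mon{C}{a}\subseteq Z$. Once $k$ is fixed, the convergence hypothesis supplies $i\in\FN$ with $\langle a_j,a\rangle\in R_k$ for all $j>i$, so $a_j\in Z_k(a)\subseteq Z\subseteq V$.

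For the reverse direction, the goal reduces to showing that for each $k\in\FN$ the class $Z_k(a)$ is itself a neighborhood of $a$: once this is done, the neighborhood assumption gives $i\in\FN$ with $\{a_j:j>i\}\subseteq Z_k(a)$, i.e.\ $\langle a,a_j\rangle\in R_k$, which by symmetry of $R_k$ is the convergence condition $\langle a_j,a\rangle\in R_k$. To show $a\in\intrr{C}{Z_k(a)}$, I would produce a set-theoretically definable separator between $\{a\}$ and $C\setminus Z_k(a)$, namely $Z_{k+1}(a)$. Clearly $\mon{C}{a}\subseteq Z_{k+1}(a)$, and the disjointness $\fig{C}{C\setminus Z_k(a)}\cap Z_{k+1}(a)=\emptyset$ follows from the defining property of a generating sequence: if $x\in Z_{k+1}(a)$ and $x\doteq_C y$ then $\langle a,x\rangle,\langle x,y\rangle\in R_{k+1}$, hence $\langle a,y\rangle\in R_k$ and $y\in Z_k(a)$, ruling out $y\in C\setminus Z_k(a)$.

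The main obstacle is the overspill step in the forward direction, i.e.\ converting the \emph{external} containment $\bigcap_{n\in\FN}Z_n(a)\subseteq Z$ into the \emph{internal} containment $Z_k(a)\subseteq Z$ for a single $k\in\FN$; everything else is a direct unwinding of the definitions of neighborhood, closure, separability, and the compositional property $R_{k+1}\circ R_{k+1}\subseteq R_k$ of the generating sequence. Note that the hypothesis that $\doteq_C$ be non totally disconnected is not needed here (it was needed in Theorem \ref{acc}), since we only manipulate the images $Z_n(a)$ and never need to find distinct points inside them.
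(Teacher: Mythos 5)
Your proof is correct and follows essentially the same route as the paper's: the forward direction rests on every neighborhood containing some $Z_k(a)$, and the reverse on each $Z_k(a)$ being a neighborhood of $a$. The paper merely asserts both facts, whereas you supply the justifications (the overspill argument for the first, the separator $Z_{k+1}(a)$ with the property $R_{k+1}\circ R_{k+1}\subseteq R_k$ for the second); your observation that the non-totally-disconnected hypothesis is not actually used is also accurate.
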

\begin{proof}
  Let $(R_n)_{n\in\FN}$ be a generating sequence of $\doteq_\mathscr{C}$ and $Z_n(a)$ be an image of $a$ with $R_n$.
  Then, for every neighborhood $V\in\mathscr{V}(a)$, there exists $\ell\in\FN$ which satisfies $Z_n(a)\subseteq V$ for all $n>\ell$.
  Since $(\mon{C}{a_i})_{i\in\FN}$ converges to $\mon{C}{a}$, there exists $a_{i_n}\in Z_n(a)$ for each $Z_n(a)$.

  Conversely, since each $Z_n(a)$ is a neighborhood of $a$, or $Z_n(a)\in\mathscr{V}(a)$, there exists $n_i\in\FN$ which satisfies $\{a_{j}:j>n_i\}\subseteq Z_{n}(a)$ for each $n\in\FN$.
  It implies $(\mon{C}{a_i})_{i\in\FN}$ converges to $\mon{C}{a}$.
\end{proof}

\section{Compactness}
A family $\mathscr{A}$ of classes is a \textit{cover} of a class $X$ iff $X\subseteq\cup\mathscr{A}$.
A continuum $\mathscr{C}$ is \textit{compact} iff every  
open cover of $C$ has its finite subcover.
A class $X\subseteq C$ is \textit{compact} iff every open cover of $X$ has its finite subcover.

\begin{thm}
A continuum $\mathscr{C}$ is compact iff every countable sequence has a converging subsequence and its limit in ${C}$.
\end{thm}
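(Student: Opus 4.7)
The plan is to route each implication through the generating sequence $(R_n)_{n\in\FN}$ of $\doteq_\mathscr{C}$, together with the fact (recorded in the proof of the previous theorem) that every neighborhood of $\mon{C}{a}$ contains some $Z_n(a)$. The quasi-triangle property $\langle x,y\rangle,\langle y,z\rangle\in R_{n+1}\Rightarrow\langle x,z\rangle\in R_n$ will be invoked repeatedly to turn proximity at index $n+1$ into membership of a $Z_n$.

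\emph{Forward direction.}  Given $(\mon{C}{a_i})_{i\in\FN}$, I would form the decreasing chain of closed figures $F_n=\cl{C}{\{a_i\,:\,i\geq n\}}$, each nonempty.  If $\bigcap_n F_n$ were empty, the classes $\{C\setminus F_n\,:\,n\in\FN\}$ would constitute an open cover of $C$, so compactness would produce a finite subcover; monotonicity would then force some $F_N$ to be empty, a contradiction.  Fix $x\in\bigcap_n F_n$ and recursively choose $i_{k+1}>i_k$ with $a_{i_{k+1}}\in Z_{k+1}(x)$: the condition $x\in\cl{C}{\{a_j\,:\,j>i_k\}}$ forbids the set-theoretically definable class $Z_{k+2}(x)$ (which contains $\mon{C}{x}$) from separating $\{x\}$ from $\{a_j\,:\,j>i_k\}$, and property (2) turns the resulting common point into some $a_j\in Z_{k+1}(x)$.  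The subsequence so produced converges to $\mon{C}{x}$.

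\emph{Converse direction.}  Given an open cover $\mathscr{A}$ of $C$, I set $G_n=\{x\in C\,:\,(\exists A\in\mathscr{A})(Z_n(x)\subseteq A)\}$, so $(G_n)$ is increasing and $\bigcup_n G_n=C$ by the cited neighborhood fact.  If no $G_n$ were all of $C$, I would pick $a_n\in C\setminus G_n$ and apply the hypothesis to extract $a_{i_k}\to x\in C$.  Choosing $m$ with $x\in G_m$ and a witness $A\in\mathscr{A}$ with $Z_m(x)\subseteq A$, property (2) shows that once $a_{i_k}\in Z_{m+1}(x)$ one has $Z_{m+1}(a_{i_k})\subseteq Z_m(x)\subseteq A$, i.e.\ $a_{i_k}\in G_{m+1}$; for $k$ so large that $i_k\geq m+1$ this contradicts $a_{i_k}\notin G_{i_k}\supseteq G_{m+1}$.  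Hence $G_N=C$ for some $N$, and choosing for each $x$ a witness $A_x\supseteq Z_N(x)$ reduces the task of finding a finite subcover of $\mathscr{A}$ to finding a finite subcover of $\{Z_N(x)\,:\,x\in C\}$.

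\emph{Main obstacle.}  The remaining step — producing a finite subcover of $\{Z_N(x)\,:\,x\in C\}$ — is where I expect the principal difficulty, since no single witness controls every point at once.  I would argue by contradiction: pick $b_0\in C$ and recursively $b_{i+1}\in C\setminus\bigcup_{j\leq i}Z_N(b_j)$.  If some subsequence $\mon{C}{b_{i_k}}\to y$ in $C$, then for $j<k$ both large enough to have $b_{i_j},b_{i_k}\in Z_{N+1}(y)$, property (2) would force $\langle b_{i_j},b_{i_k}\rangle\in R_N$, i.e.\ $b_{i_k}\in Z_N(b_{i_j})$, in flat contradiction with the construction.  This contradiction with the sequential-compactness hypothesis closes the argument.
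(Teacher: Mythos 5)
Your proof is correct, but it takes a genuinely different route from the paper's, and in one respect a stronger one. For the forward direction the paper argues by contraposition: from a sequence with no convergent subsequence it notes that $F=\{\mon{C}{a_i}:i\in\FN\}$ is closed (via the characterization of closedness by sequences), takes $O=C\setminus F$ together with pairwise disjoint classes $\intrr{C}{Z_{n_i}(a_i)}$, and exhibits an open cover with no finite subcover; you instead run the finite-intersection-property argument on the closed tails $F_n=\cl{C}{\{a_i:i\geq n\}}$ and extract a subsequence converging to a common point, using the quasi-triangle property of the generating sequence to pass from non-separability to membership in $Z_{k+1}(x)$ --- this is fine, though it leans on the fact that the closure of a countable ($\sigma$-) class is a closed figure, a result of Vop\v{e}nka's not quoted in this paper. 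The more substantive difference is in the converse: the paper only considers a \emph{countable} cover $C=\bigcup_{i\in\FN}O_i$ without finite subcover, silently assuming that non-compactness can be witnessed countably, whereas your argument handles an arbitrary open cover by first extracting a uniform index $N$ with $Z_N(x)$ refining the cover (your $G_n$ classes, a Lebesgue-number argument powered by the neighborhood lemma) and then showing that $\{Z_N(x):x\in C\}$ admits a finite subfamily covering $C$ via the impossibility of an infinite $R_N$-separated sequence. That last step is essentially a re-derivation, from the sequential hypothesis, of the finiteness of $R$-nets already available as Theorem \ref{rnet}, which you could cite directly since $R_N$ is an upper bound of the compact equivalence $\doteq_C$. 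In short: your version closes a gap the paper leaves open (arbitrary versus countable covers), at the cost of a slightly longer argument and reliance on the closedness of closures of $\sigma$-classes.
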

\begin{proof}
  Suppose $\mathscr{C}$ is compact and $(\mon{C}{a_i})_{i\in\FN}$ 
  has no converging subsequences.
  Then, $F=\{\mon{C}{a_i}:i\in\FN\}$ is a closed class by Theorem \ref{clo} since it has no accumulation points.
  Let $O=C\setminus F$, then, $O$ is open and
  \[
  a_i\notin O\quad \text{ for all }i\in\FN.
  \]
  Since each point is discernible, there exists a sequence of naturals $(n_i)_{i\in\FN}$ and set-theoretically definable classes $Z_{n_i}(a_i)$ which are pairwise disjoint.
  Then, $C$ is covered by the union of the following classes
  \[
  C\ =\ O\cup \bigcup_{i\in\FN} O_i\qquad \text{where }\ O_i=\intrr{C}{Z_{n_i}(a_i)}.
  \]
  However, no finite subcover exists.
  It contradicts with the compactness of $\mathscr{C}$.

  Conversely, suppose $\mathscr{C}$ is not compact and every countable sequence has converging subsequences.
  Let $C=\bigcup_{i\in\FN} O_i$ be a countable cover of $C$ but has no finite subcover.
  Choose for each $n\in\FN$
  \[
  a_n\notin \bigcup_{i\in n+1}O_i.
  \]
  Then, $\left\{a_i\right\}_{i\in\FN}$ has an accumulation point.
  It implies there exist a strictly increasing function $F:\FN\rightarrow \FN$ and subsequence $\left(a_{F(j)}\right)_{j\in\FN}$ which satisfies $\lim_{j\rightarrow\infty}{\mon{C}{a_{F(j)}}}=\mon{C}{a}$.
  Since $C=\bigcup_{i\in\FN}O_i$, there must be $k\in\FN$ which satisfies $\mon{C}{a}\subseteq O_{k}$.
  Since $\left(a_{F(j)}\right)_{j\in\FN}$ converges to $\mon{C}{a}$, there must be $\ell\in\FN$ which satisfies $\ell>k$ and $a_m\in O_k\subseteq  \bigcup_{i\in m+1} O_i$ for all $m>\ell$.
  But, $a_m\notin \bigcup_{i\in m+1}O_i$ by definition.
  It is a contradiction.
\end{proof}

Let us, next, introduce an \textit{$R$-net}.
A class $X\subseteq C$ is an $R$-net iff there are no distinct element $x,y\in X$ such that $\langle x,y\rangle\in R$.
$X$ is \textit{maximal $R$-net on $C$} iff $X\subseteq C$ and for each $y\in C$ there is an $x\in X$ such that $\langle x, y\rangle\in R$.

A relation $R$ is an \textit{upper bound} of an equivalence $\doteq$ iff $R$ is symmetrical, set-theoretically definable and $\doteq$ is a subclass of $R$, i.e., $x\doteq y$ implies $\langle x,y\rangle\in R$ for each $x,y\in C$.

\begin{thm}[the third theorem at the Section 1 of Chapter 3 of Vop\v{e}nka \cite{ast}]\label{rnet}
    Let $\doteq$ be a compact equivalence and let $R$ be its upper bound.
    Then there is a finite number $n$ such that, for each $R$-net $X$, $X$ is subvalent to $n$, i.e., there is a one-one mapping of $X$ onto a subclass of $n$.
\end{thm}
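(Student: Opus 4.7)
The plan is to argue by contradiction via an overspill from $\FN$ to $N$. Suppose no finite $n$ serves as a uniform bound; then for every $n\in\FN$ there is an $R$-net of more than $n$ elements. Because every subclass of an $R$-net is again an $R$-net, from such a witness we can extract a \emph{set} $R$-net of cardinality exactly $n+1$. Consequently the class
\[
A\ =\ \left\{n\in N\ ;\ (\exists X)\bigl(X\text{ is a set, }X\text{ is an }R\text{-net, and }|X|\geq n\bigr)\right\}
\]
contains all of $\FN$.

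The crucial observation is that $A$ is set-theoretically definable. Since $R$ is set-theoretically definable (by the definition of an upper bound), the condition ``$X$ is an $R$-net'' is a set-formula in $X$, and ``$|X|\geq n$'' is obviously set-theoretic; an existential quantifier ranging over sets preserves set-theoretic definability. Because $\FN$ is itself not set-theoretically definable (as recorded in the footnote following Theorem~\ref{pi-sigma}), we cannot have $A\cap N=\FN$. Hence there exists an infinite natural $\nu\in N\setminus\FN$ with $\nu\in A$, and by unwinding the definition of $A$ we obtain a set $X$ that is an $R$-net with $|X|\geq\nu$; in particular, $X$ is an infinite set.

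Compactness of $\doteq$ now applies to this infinite $X$: there are distinct $x,y\in X$ with $x\doteq y$. Since $R$ is an upper bound of $\doteq$, we get $\langle x,y\rangle\in R$ with $x\neq y$, contradicting the defining property of an $R$-net. The main obstacle I expect is not the compactness step, which is immediate, but rather pinning down that $A$ is genuinely set-theoretically definable so that the overspill principle applies; this rests on the assumed definability of $R$ and on restricting the existential witness to be a set (rather than an arbitrary class), and it is also what lets us pass at the outset from an arbitrary $R$-net witness to a finite set witness of any prescribed finite size.
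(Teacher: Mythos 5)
Your proof is correct. The paper states this theorem as an imported result of Vop\v{e}nka and gives no proof of its own, but your argument --- overspill of the set-theoretically definable class $A$ of cardinalities of set $R$-nets beyond $\FN$ (using that $\FN$ is not set-theoretically definable and that $R$ is, by the definition of an upper bound), followed by applying compactness of $\doteq$ to the resulting infinite set $R$-net and the inclusion $\doteq\,\subseteq R$ --- is exactly the standard proof in Vop\v{e}nka's book, and each step (extracting finite \emph{set} witnesses of every finite size, the definability of $A$, and the final contradiction) checks out.
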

$X\precsim n$ denotes that $X$ is subvalent to $n$ hereafter.

The following theorem implies the equivalence between compactness and being closed in an AST environment.

\begin{thm}\label{comclo}
A figure $X$ of $\mathscr{C}$ is compact iff it is closed.
\end{thm}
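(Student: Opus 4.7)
The plan is to prove the two implications separately, using the separation characterization of the closure, the figure/$\pi$-class equivalences in Theorem~\ref{closure}, and the finiteness of $R$-nets from Theorem~\ref{rnet}. The forward direction admits a direct separation-by-open-cover argument; the backward direction is more delicate and I will route it through the sequential characterization established for $\mathscr{C}$ itself just above.

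\smallskip

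\noindent\textbf{Compact $\Rightarrow$ closed.}
I would fix $a\in C\setminus X$ and show $\Sep{C}{\{a\}}{X}$, which by Theorem~\ref{closure} gives $X=\cl{C}{X}$. Since $X$ is a figure we have $\mon{C}{a}\cap X=\emptyset$, so for every $y\in X$ there is some $n_y\in\FN$ with $\langle a,y\rangle\notin R_{n_y}$. Applying the triangle-like property $R_{n+1}\circ R_{n+1}\subseteq R_n$ of the generating sequence twice, one checks that $Z_{n_y+1}(a)\cap Z_{n_y+1}(y)=\emptyset$ and that $Z_{n_y+2}(y)$ separates $\{y\}$ from $C\setminus Z_{n_y+1}(y)$, so $\mon{C}{y}\subseteq \intrr{C}{Z_{n_y+1}(y)}=:U_y$. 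Then $\{U_y:y\in X\}$ is an open cover of $X$; compactness extracts a finite subcover $U_{y_1},\dots,U_{y_k}$. Setting $n:=\max_i(n_{y_i}+1)\in\FN$, the set-theoretically definable class $Z_n(a)$ contains $\mon{C}{a}$ and, because $R_n\subseteq R_{n_{y_i}+1}$, is disjoint from each $U_{y_i}$ and hence from $X$. This is the required separator.

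\smallskip

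\noindent\textbf{Closed $\Rightarrow$ compact.}
I would argue contrapositively, mimicking the proof of the preceding sequential characterization of $\mathscr{C}$-compactness. By Theorem~\ref{closure} write $X=\fig{C}{u}$ for a set $u$. Because every open class is a $\sigma$-class, an open cover of $X$ without finite subcover refines to a countable one $\{O_i\}_{i\in\FN}$ still without finite subcover; choose $a_n\in X\setminus\bigcup_{i\le n}O_i$. Using that each $R_n$ is an upper bound of the compact equivalence $\doteq_C$, Theorem~\ref{rnet} ensures finite $R_n$-nets in $u$, and a nested pigeonhole on those nets extracts indices $j_0<j_1<\cdots$ so that $(a_{j_k})$ lies, eventually, in a common $Z_n$-ball at every level $n$. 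The resulting subsequence is Cauchy with respect to $(R_n)$, and since $X$ is a closed figure of a set its limit $\mon{C}{a}$ lies in $X$. Then $a\in O_k$ for some $k$, and by openness $a_{j_\ell}\in O_k$ for all large $\ell$, contradicting $a_{j_\ell}\notin\bigcup_{i\le j_\ell}O_i$.

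\smallskip

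\noindent\textbf{Main obstacle.} The backward direction is the difficult half. Two issues have to be handled with care. First, reducing an arbitrary open cover to a countable one requires using that open classes are $\sigma$-classes together with finiteness of $R_n$-nets, so that the countable refinement still fails to have a finite subcover of $X$. Second, the nested-pigeonhole extraction of a Cauchy subsequence must be interleaved so that at each stage the remaining index class is still infinite and the chosen $R_n$-representative is compatible with those chosen at lower levels; and one must invoke the closedness of $X$ (via Theorem~\ref{closure}) at the very end to guarantee that the Cauchy limit actually lies inside $X$, after which the contradiction with the choice of $a_n$ is immediate.
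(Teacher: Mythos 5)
Your proposal is correct and follows essentially the same route as the paper: the forward direction separates a point of $C\setminus X$ from $X$ by covering $X$ with open classes built from the generating sequence and extracting a finite subcover whose set-theoretically definable union (here, equivalently, the complementary $Z_n(a)$) witnesses $\Sep{C}{\{a\}}{X}$, and the backward direction uses the finiteness of maximal $R_n$-nets from Theorem~\ref{rnet} and a pigeonhole extraction of a converging subsequence whose limit lies in the closed figure $X$, contradicting the choice of the $a_n$ outside the partial unions of the cover. The only differences are cosmetic: you inline the sequential characterization of compactness rather than citing it, and you are somewhat more explicit than the paper about where closedness and the countable refinement of the cover are actually used.
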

\begin{proof}
  Suppose that $X$ is compact.
  Let us choose $x\in X$ and $y\in C\setminus X$ arbitrarily.
  Since $X$ is a figure and every points are separated by Theorem \ref{sep}, there exists $n\in\FN$ and set-theoretically definable classes $Z_n(x)$ and $Z_n(y)$ which are mutually disjoint and separate $\mon{C}{x}$ and $\mon{C}{y}$.
  Then, open classes $O_n(x)=\intrr{C}{Z_n(x)}$ and $O_n(y)=\intrr{C}{Z_n(y)}$ are mutually disjoint.
  Now, let us fix $y\in C\setminus X$ and choose large enough $n\in\FN$ so that the following inclusion holds:
  \[
  X\subset \bigcup_{x\in X} \{O_n(x): O_n(y)\cap O_n(x)=\emptyset\}.
  \]
  Since $X$ is compact, there exists a finite subset $m$ of $X$ which covers $X$
  \[
  X\subset \bigcup_{x\in m} \{O_n(x): O_n(y)\cap O_n(x)=\emptyset\}.
  \]
  For each $Z_n(x)$ is set-theoretically definable, $\bigcup_{x\in m}\{Z_n(x):Z_n(y)\cap Z_n(x)=\emptyset\}$ too is set-theoretically definable, so that it separates $\{y\}$ and $X$.
  Since $y$ is chosen arbitrarily, it implies that $X$ is closed.

  Suppose, conversely, that $X$ is closed but not compact.
  Then, it has a countable sequence with no converging subsequences.
  Let us choose one such sequence of points $(\mon{C}{a_i})_{i\in\FN}$ arbitrarily.
  Now, let $X$ be a maximal $R$-net which satisfies $X\precsim n$ for some $n\in\FN$.
  Then, there must be at least one element $x_i\in X$ whose image $Z_i(x_i)$ include infinite points for each $i\in\FN$, that is, $\bigcup_{j\in J_i}\mon{C}{a_j}\subseteq Z_{i}(x_{i})$ where $J_i$ is a countable subclass of $\FN$. 
  Let us define a function $F(i)=j$ which satisfies $a_j\doteq x_{i}$.
  Then, $(\mon{C}{a_{F(j)}}_{j\in \FN}$ is a subsequence of $(\mon{C}{a_i})_{i\in\FN}$ and converges to $\mon{C}{a}\in\bigcap_{i\in\FN} Z_i(x_{i})$.
  It is a contradiction.
\end{proof}

\section{Connectedness}

A set $u$ is \textit{connected} in $\mathscr{C}$ iff for each  nonempty proper subset $v$ of $u$ there exist $x\in v$ and $y\in u\setminus v$ which satisfy $x\doteq_C y$.
To notify a set $u$ is connected, let us denote $\Cntd{1}{u}$.
A figure $X$ is connected iff for each $x,y\in X$ there is a connected set $u\subseteq X$ which satisfies $x,y\in u$.
A continuum $\mathscr{C}$ is \textit{connected} iff for all figures of $\mathscr{C}$ is connected.


\begin{thm}
  A continuum $\mathscr{C}$ is connected iff $\mathscr{C}$ has no clopen figures besides $C$ and $\emptyset$.
\end{thm}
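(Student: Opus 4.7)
I plan to prove both directions, with the forward one direct and the converse by contraposition through an explicit construction of a clopen figure from a chosen pair of unconnectable points.

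For the forward direction, suppose $\mathscr{C}$ is connected and $X$ is a clopen figure; assume for contradiction that $\emptyset \ne X \ne C$, pick $x \in X$ and $y \in C\setminus X$, and use connectedness of $\mathscr{C}$ to obtain a connected set $u \subseteq C$ containing both. Then $v := u \cap X$ is a nonempty proper subset of $u$, so the definition of a connected set yields $a \in v$ and $b \in u \setminus v$ with $a \doteq_C b$. Since $X$ is a figure and $a \in X$, we get $b \in X$, contradicting $b \in u \setminus X$.

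For the converse I argue the contrapositive: suppose $\mathscr{C}$ is not connected and produce a nontrivial clopen figure. Pick $x, y \in C$ that no connected subset of $C$ contains, and let $(R_n)_{n \in \FN}$ generate $\doteq_C$. For each $n$, set $K_n = \{z \in C : \text{there is a finite } R_n\text{-chain in } C \text{ from } x \text{ to } z\}$. Each $K_n$ is a figure, since for $z \in K_n$ with $z \doteq_C z'$ we have $\langle z, z' \rangle \in R_n$, so the chain extends. Next I apply Theorem \ref{rnet} to the compact equivalence $\doteq_C$ with set-theoretic, symmetric upper bound $R_n$: any maximal $R_n$-net $M_n \ni x$ is finite, and hence $M_n \cap K_n$, being a subclass of a finite set, is itself a set. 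Writing $K_n = \bigcup_{a \in M_n \cap K_n} Z_n(a)$ exhibits $K_n$ as a finite union of set-theoretically definable classes, so $K_n$ is set-theoretically definable and, by Theorem \ref{closed}, clopen.

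If some $K_n$ fails to contain $y$, it is the desired nontrivial clopen figure. Otherwise $y \in K_n$ for every $n \in \FN$, giving, for each $n$, a finite $R_n$-chain $c_n \subseteq C$ from $x$ to $y$. I then apply the Prolongation Axiom of AST to extend both $(c_n)$ and $(R_n)$ to set functions on an initial segment $[0, \nu]$ of $N$ with $\nu \in N \setminus \FN$; the relevant set-formula properties (endpoints $x$ and $y$, $\tilde R_\nu$ descending with the 2-step property, consecutive elements $\tilde R_\nu$-related) transfer to the extension, so $\tilde R_\nu \subseteq \bigcap_{n \in \FN} R_n = \doteq_C$, and the extended chain $\tilde c_\nu$ from $x$ to $y$ in $C$ has consecutive elements $\doteq_C$-related. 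Its image is then a set-sized connected subset of $C$ containing both $x$ and $y$, contradicting our choice; hence some $K_n$ must properly omit $y$.

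The main obstacle is ensuring that $K_n$ is simultaneously a figure and set-theoretically definable: its natural defining condition quantifies over chain length in $\FN$, which is not itself a set formula, so set-theoretic definability must be recovered indirectly through the finiteness of the maximal net $M_n$ and the AST principle that subclasses of finite sets are sets. A secondary subtlety is verifying that the Prolongation step yields a genuine $\doteq_C$-chain, which hinges on $\tilde R_\nu$ inheriting the descending and 2-step structure of the generating sequence so that $\tilde R_\nu$ lies inside every $R_n$ for $n \in \FN$ and hence inside $\doteq_C$.
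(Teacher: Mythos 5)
Your argument is correct in substance, and it is a genuinely different --- and far more informative --- route than the paper's, whose proof of this theorem consists of two bare assertions: the forward direction never explains why a nontrivial clopen figure conflicts with connectedness, and the converse simply restates the claim to be proved. Your forward direction is the right elementary argument: intersecting a connected set $u$ joining $x\in X$ to $y\in C\setminus X$ with the clopen figure $X$ produces the forbidden nonempty proper subset $v$; note that $u\cap X$ really is a \emph{set}, as the definition of a connected set requires, precisely because $X$ is clopen and hence set-theoretically definable by Theorem \ref{closed}. Your converse supplies the construction the paper omits entirely: the $R_n$-chain component $K_n$ of $x$ is a figure because $\doteq_C\subseteq R_n$, and it is set-theoretically definable (hence clopen) via the finiteness of maximal $R_n$-nets from Theorem \ref{rnet}, which is exactly the right way to evade the non-set-formula quantification over chain lengths.

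Two caveats. First, both you and the paper must read ``$\mathscr{C}$ is connected'' as ``$C$ is connected as a figure''; under the paper's literal definition (\emph{every} figure of $\mathscr{C}$ is connected) the theorem is false, since $\fig{C}{\{x,y\}}=\mon{C}{x}\cup\mon{C}{y}$ is a disconnected figure whenever $\mon{C}{x}\ne\mon{C}{y}$, even in a continuum with no nontrivial clopen figures. Second, the Prolongation Axiom applies to countable functions with \emph{set} values, so you cannot prolong $(R_n)_{n\in\FN}$ itself when the $R_n$ are proper classes. The standard repair: first prolong the chain sequence $(c_n)_{n\in\FN}$ to a set function $f$ on some $\gamma\in N\setminus\FN$, put $w=\bigcup_{n\in\gamma}\rng(f(n))$ (a set), prolong the set-valued sequence $(R_n\cap w^2)_{n\in\FN}$ to $\tilde r$, and then use overspill twice --- once to find an infinite $\nu$ at which $f(\nu)$ is still an $\tilde r(\nu)$-chain from $x$ to $y$, and once to ensure $\tilde r(\nu)\subseteq R_m\cap w^2$ for every $m\in\FN$, hence $\tilde r(\nu)\subseteq\,\doteq_C$. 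With that adjustment the resulting chain is a motion of a position, its trace is a connected set containing $x$ and $y$ by Theorem \ref{vop41-1}, and your contradiction goes through.
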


\begin{proof}
  Suppose $\mathscr{C}$ has a clopen figure $X$, then, $C\setminus X$ is set-theoretically definable, thus, clopen.
  It contradicts with the $\mathscr{C}$'s connectedness.

  Conversely, suppose that $\mathscr{C}$ is not connected, then, there exists at least one clopen figure $X$.
  It is a contradiction.
\end{proof}

\begin{thm}
  Let $\mathscr{C}$ be a continuum and $X\subseteq C$ be a set-theoretically definable class.
  $X$ is connected iff there exists no clopen figure $Y$ in $\mathscr{X}=\langle X,\doteq_C\,\upharpoonright X\rangle$ which satisfies $X\cap Y\ne\emptyset$ and $X\setminus Y\ne\emptyset$.
\end{thm}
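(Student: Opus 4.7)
The plan is to reduce the statement to the previous theorem, applied not to $\mathscr{C}$ itself but to the ``sub-continuum'' $\mathscr{X}=\langle X,\doteq_C\upharpoonright X\rangle$ already named in the hypothesis. First I would verify that $\mathscr{X}$ is a continuum in its own right. The support $X$ is set-theoretically definable by assumption; the relation $\doteq_C\upharpoonright X=\doteq_C\cap(X\times X)$ is the intersection of a $\pi$-class with a set-theoretically definable (hence $\pi$-)class, so it is a $\pi$-class by Theorem \ref{pi-sigma}, and it is visibly still an equivalence on $X$. Compactness transfers from $\doteq_C$ for free: any infinite $u\subseteq X$ is an infinite subset of $C$, so it contains distinct $x,y$ with $x\doteq_C y$, and hence $\langle x,y\rangle\in\doteq_C\upharpoonright X$.

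Next I would observe that the notion ``the set $u$ is connected'' depends only on pairs $\langle x,y\rangle\in u\times u$; consequently, for every $u\subseteq X$ the predicate $\Cntd{1}{u}$ computed in $\mathscr{C}$ and in $\mathscr{X}$ coincides, since $\doteq_C$ and $\doteq_C\upharpoonright X$ agree on $X\times X$. Thus the assertion ``$X$ is connected'' (every pair $x,y\in X$ lies in some connected $u\subseteq X$) reads the same inside $\mathscr{C}$ as inside $\mathscr{X}$, and it amounts to the continuum $\mathscr{X}$ being connected in the sense of the previous section.

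Granted these identifications, the theorem is a direct restatement of the preceding one applied to $\mathscr{X}$: that result gives that $\mathscr{X}$ is connected iff $\mathscr{X}$ has no clopen figures besides $X$ and $\emptyset$. Since every clopen figure $Y$ of $\mathscr{X}$ is by definition a subclass of $X$, excluding the two trivial cases $Y=\emptyset$ and $Y=X$ is literally the pair of conditions $X\cap Y\ne\emptyset$ and $X\setminus Y\ne\emptyset$, which yields the stated equivalence.

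The only mildly delicate step is the bookkeeping at the start, and this is where the hypothesis that $X$ be set-theoretically definable is essential: it is needed both so that $X\times X$ is set-theoretically definable (ensuring that $\doteq_C\upharpoonright X$ is a $\pi$-class, not merely a relation), and so that the derived notions of ``figure'', ``open'' and ``clopen'' inside $\mathscr{X}$ are the ones the previous theorem was proved for. Once those checks are made, the remainder is purely formal translation.
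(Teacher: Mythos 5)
Your proposal is correct in outline but takes a genuinely different route from the paper. You verify that $\mathscr{X}=\langle X,\doteq_C\upharpoonright X\rangle$ is a continuum in its own right and then quote the preceding theorem (a continuum is connected iff it has no clopen figures besides its support and $\emptyset$), whereas the paper argues directly: in one direction it notes that a clopen $Y$ is in particular a figure, so no $x\in X\setminus Y$ is $\doteq_C$-related to any $y\in X\cap Y$ and no connected set can join them; in the other it extracts from a failure of connectedness a pair $x,y$ with $\Sep{X}{\{x\}}{\{y\}}$, takes a set-theoretically definable separating figure $Y$, and uses the set-theoretic definability of $X$ to conclude that $Y\cap X$ and $X\setminus Y$ are set-theoretically definable, hence clopen. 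Your reduction is more economical and makes the role of the hypothesis on $X$ transparent; the paper's direct proof is more self-contained, which matters here because the converse direction of the theorem you cite is itself only asserted, not proved, in the paper --- so the substantive step (manufacturing a set-theoretically definable separating class from the absence of a connected set joining $x$ and $y$) is carried only by the direct argument.

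Two points in your reduction need attention. First, the paper defines a continuum to be connected iff \emph{all} of its figures are connected, which taken literally is strictly stronger than the support being a connected figure (in $\mathscr{R}$ the figure $\mon{}{0}\cup\mon{}{1}$ is disconnected even though there are no nontrivial clopen figures). Your identification of ``$X$ is connected'' with ``$\mathscr{X}$ is connected'' therefore requires reading that definition as referring to the support; otherwise the implication ``a nontrivial clopen $Y$ exists $\Rightarrow$ the figure $X$ itself is disconnected'' is not what the cited theorem delivers and must be supplied separately (it is the easy direction: since $Y$ is a figure, no connected $u\subseteq X$ can meet both $Y$ and $X\setminus Y$). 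Second, a generating sequence for $\doteq_C\upharpoonright X$ is not literally $(R_n\cap(X\times X))_{n\in\FN}$, since generating sequences must be reflexive with $R_0=V^2$; this is harmless because every $\pi$-equivalence has a generating sequence, but it deserves a word. With these repairs the remainder of your argument is, as you say, purely formal translation.
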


\begin{proof}
  Suppose there exists a clopen figure $Y$ which satisfies $X\cap Y\ne \emptyset$ and $X\setminus Y\ne\emptyset$.
  Then, for all $x\in X\setminus Y\subseteq C\setminus Y$ and $y\in X\cap Y$, it is satisfied that $\neg(x\doteq_C y)$.
  It contradicts with $X$'s connectedness.

  Conversely, suppose $X$ is not connected.
  Then, there exists a pair of points $x,y\in X$ which has no subset $u\subseteq X$ which is connected and include them.
  It implies that $\{x\}$ and $\{y\}$ are separable in $\mathscr{X}$, so that there exists a set-theoretically definable figure $Y$ in $\mathscr{X}$ which satisfies $\cl{X}{x}\subseteq Y$ and $\cl{X}{y}\cap Y=\emptyset$.
  Since $X$ is set-theoretically definable, both $Y\cap X$ and $X\setminus Y$ are also set-theoretically definable and nonempty.
  It is a contradiction.
\end{proof}

\begin{thm}
  Let $\mathscr{C}$ be a continuum and $X$ be a closed figure.
  Then $X$ is connected iff there exists no pair of closed figures $Y_1$ and $Y_2$ such that
  \[
  X\subseteq Y_1\cup Y_2\qquad
  Y_1\cap Y_2\cap X=\emptyset\qquad
  Y_1\cap X\ne\emptyset\qquad
  Y_2\cap X\ne\emptyset.
  \]
\end{thm}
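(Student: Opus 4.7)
The plan is to prove the two directions separately. For the forward implication, I argue by contradiction. Assume such $Y_1, Y_2$ exist and that $X$ is connected. Pick $p \in Y_1 \cap X$ and $q \in Y_2 \cap X$; by connectedness of $X$ there is a connected set $v \subseteq X$ containing both. The subset $v \cap Y_1$ is nonempty (it contains $p$) and proper in $v$ (it excludes $q$, since $Y_1 \cap Y_2 \cap X = \emptyset$ and $q \in Y_2 \cap X$), so the connectedness of $v$ yields $a \in v \cap Y_1$ and $b \in v \setminus Y_1$ with $a \doteq_C b$. Since $Y_1$ is a figure, this forces $b \in \mon{C}{a} \subseteq Y_1$, a contradiction.

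For the backward implication I work by the contrapositive and reduce to the preceding theorem. By Theorem \ref{closure}, write $X = \fig{C}{u}$ for some set $u$. The first step is the intermediate claim that $X$ is connected in $\mathscr{C}$ if and only if $u$ is connected in $\mathscr{C}$. The lifting direction extends any witnessing structure in $u$ by adjoining endpoints drawn from $X$; the projection direction replaces each intermediate point $z_i$ of a $\doteq_C$-chain in $X$ by some $a_i \in u$ with $a_i \doteq_C z_i$, using transitivity of $\doteq_C$ to obtain $a_i \doteq_C a_{i+1}$, hence a $\doteq_C$-chain in $u$. Therefore $X$ not connected implies $u$ not connected, and applying the preceding theorem to the set-theoretically definable class $u$ produces a clopen figure $Y$ in $\mathscr{U} = \langle u, \doteq_C \upharpoonright u\rangle$ with $\emptyset \ne Y \ne u$; both $Y$ and $u \setminus Y$ are then sets.

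I now set $Y_1 = \fig{C}{Y}$ and $Y_2 = \fig{C}{u \setminus Y}$, which are closed figures by Theorem \ref{closure}. The cover $X \subseteq Y_1 \cup Y_2$ follows from $u = Y \cup (u \setminus Y)$ together with distributivity of $\fig{C}{\cdot}$ over finite unions (Theorem \ref{basic}(4)), and the nonemptiness of $Y_i \cap X$ is immediate from $Y \subseteq Y_1 \cap X$ and $u \setminus Y \subseteq Y_2 \cap X$. The main obstacle is the disjointness condition $Y_1 \cap Y_2 \cap X = \emptyset$: a common point $z$ would furnish $a \in Y$ and $b \in u \setminus Y$ with $a \doteq_C z \doteq_C b$, hence $a \doteq_C b$; but $Y$ is a figure in $\mathscr{U}$, which forces $b \in \mon{U}{a} \subseteq Y$, contradicting $b \in u \setminus Y$. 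The only genuinely delicate step in the argument is the chain-projection lemma underlying the $X$--$u$ connectedness equivalence, where any AST subtleties about chains inside a set must be managed.
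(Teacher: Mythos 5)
Your overall strategy is the paper's, but there is a genuine gap in the forward direction, and it is not where you locate the delicacy. You apply the definition of connectedness of the set $v$ to the subclass $v\cap Y_1$. That definition quantifies only over nonempty proper \emph{subsets} of $v$, i.e.\ over sets, whereas $v\cap Y_1$ is in general only a semiset: $Y_1$ is a closed figure, hence a $\pi$-class, and the intersection of a set with a $\pi$-class (think of $v\cap\mon{C}{x}$) is typically a proper semiset. The distinction is not pedantic --- if the definition were read over arbitrary subclasses, no set meeting two distinct monads would be connected (cut along $v\cap\mon{C}{x}$, a figure), contradicting Theorem \ref{vop41-1}. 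The step is repairable, but it needs a real input: $Y_1\cap X$ and $Y_2\cap X$ are disjoint closed figures, hence separable (this rests on compactness of $\doteq_C$), so there is a set-theoretically definable $Z$ with $Y_1\cap X\subseteq Z$ and $(Y_2\cap X)\cap Z=\emptyset$. Then $v\cap Z$ is a genuine nonempty proper subset of $v$ (it contains $p$ and omits $q$), connectedness of $v$ yields $a\in v\cap Z$ and $b\in v\setminus Z$ with $a\doteq_C b$, and since $v\subseteq X\subseteq Y_1\cup Y_2$ these force $a\in Y_1\cap X$ and $b\in Y_2\cap X$; the figure property of $Y_1$ then gives $b\in Y_1\cap Y_2\cap X$, the desired contradiction.

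Otherwise the proposal is sound and close to the paper, which argues the forward direction by the same contradiction phrased through sets $y_1,y_2$ with $\fig{C}{y_i}=Y_i\cap X$ (and is equally silent on the set/semiset point above). For the backward direction the paper directly asserts a partition $y_1\cup y_2=x$ of the underlying set with no cross-indiscernible pair, while you reduce to the preceding theorem via the equivalence between connectedness of $\fig{C}{u}$ and of $u$; your chain-projection argument for that equivalence is fine given Theorems \ref{vop41-1} and \ref{vop41-2}, and your verification of the four conditions for $Y_1=\fig{C}{Y}$ and $Y_2=\fig{C}{u\setminus Y}$, including the disjointness argument through the figure property of $Y$ in the restricted continuum, is correct.
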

\begin{proof}
  Suppose $Y_1$ and $Y_2$ satisfy the condition, there exist mutually disjoint sets $y_1,y_2\subseteq X$ which satisfy $\fig{C}{y_1}=Y_1\cap X$ and $\fig{C}{y_2}=Y_2\cap X$.
  Since $\fig{C}{y_1}\cap\fig{C}{y_2}=\emptyset$, for all $a\in y_1$ and $b\in y_2$, $\neg(a\doteq_C b)$ holds.
  It contradicts with $X$'s connectedness.

  Conversely, if $X$ is not connected, there exists mutually disjoint sets $y_1$ and $y_2$ which satisfies $y_1\cup y_2=x$ where $\fig{C}{x}=X$ and $\neg(a\doteq_C b)$ for all $a\in y_1$ and $b\in y_2$.
  Then, $Y_1=\fig{C}{y_1}$ and $Y_2=\fig{C}{y_2}$ satisfy all the four conditions.
\end{proof}

\begin{cor}
If continuum $\mathscr{C}$ has no clopen figures, there exist connected sets for any pair of sets $x,y\in C$.
\end{cor}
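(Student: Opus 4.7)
The plan is to chain together the first theorem of the section with the definitions of a connected continuum and a connected figure; essentially no new work is required, just careful unpacking.

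First I would apply the first theorem of this section to the hypothesis. Since $\mathscr{C}$ has no clopen figures other than the trivial ones $C$ and $\emptyset$, that theorem gives at once that $\mathscr{C}$ is connected as a continuum. (Strictly, the theorem is an iff, so the hypothesis of the corollary corresponds exactly to its right-hand side.)

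Next I would invoke the definition of a connected continuum, which states that $\mathscr{C}$ is connected iff every figure of $\mathscr{C}$ is connected. The support $C$ is itself a figure, since for every $x\in C$ the monad $\mon{C}{x}$ is by definition contained in $C$. Hence $C$ is a connected figure. Applying the definition of a connected figure to $C$, for every pair $x,y\in C$ there is a connected set $u\subseteq C$ with $x,y\in u$, which is exactly the statement of the corollary.

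The main obstacle is essentially bookkeeping: confirming that $C$ itself falls under the quantifier \emph{for all figures} in the definition of a connected continuum, and reconciling the slightly loose phrasing \textit{pair of sets $x,y\in C$} with the intended reading \textit{pair of elements $x,y\in C$}. Once this is recognized, the corollary reduces to a two-line deduction from the preceding theorem and the definitions.
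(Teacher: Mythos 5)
Your proposal is correct; the paper actually leaves this corollary without a proof, and your two-step deduction---apply the section's first theorem to get that $\mathscr{C}$ is connected, then unwind the definitions of connected continuum and connected figure for the figure $C$ itself---is exactly the argument the word ``corollary'' signals here. Your reading of ``pair of sets $x,y\in C$'' as ``pair of elements'' is the right reconciliation with the definition of a connected figure.
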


\section{Metric Spaces}
Let $\mathscr{C}$ be a continuum.
A metric $d$ is a function $d: C\times C\rightarrow Q$ which satisfies
\begin{enumerate}[(1)]
\item $d(x,y)\geq 0$ for any $x,y\in C$,
\item $d(x,y)=0$ iff $x=y$ for all $x,y\in C$,
\item $d(x,y)=d(y,x)$ for all $x,y\in C$,
\item for all $x,y$, and $z\in C$, the inequality $d(x,z)\leq d(x,y)+d(y,z)$ holds.
\end{enumerate}
A pair $\langle C,d\rangle$ is a \textit{metric space} where $d$ is a metric of $C$.
A \textit{distance} between $x$ and $y$ of $C$ is given as $d(x,y)$.

Let $a\in C$ and $e>0$ be a finite rational number.
Then, a \textit{ball of radius $e$ centered at the position $a$} is defined as
\[
B(a;e)\ =\ \bigcup_{i\in\FN}\left\{x\,:\, \left( x\in C\right)\wedge\left(d(a,x)<e-\frac{1}{2^i}\right)\right\}
\]
\begin{thm}
  There exists an indiscernibility equivalence $\doteq_B$ which makes $B(a;e)$ an open class for any $a\in C$ and $e\in\FQ$.
\end{thm}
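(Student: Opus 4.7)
The plan is to define $\doteq_B$ directly from the metric by taking infinitesimal closeness as indiscernibility. Set $R_0 = V^2$ and, for each $n \geq 1$,
\[
R_n \;=\; \{\langle x,y\rangle \in C \times C : d(x,y) < 1/2^n\},
\]
and let $\doteq_B = \bigcap_{n \in \FN} R_n$, so that $x \doteq_B y$ iff $d(x,y) < 1/2^n$ for every $n \in \FN$.

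First I would verify that $(R_n)_{n \in \FN}$ is a generating sequence in the sense of Section 2: each $R_n$ is set-theoretically definable (the parameter $1/2^n$ is a single rational), reflexive since $d(x,x) = 0$, and symmetric since $d(x,y) = d(y,x)$; the composition condition $R_{n+1} \circ R_{n+1} \subseteq R_n$ follows from the triangle inequality, because $d(x,y) < 1/2^{n+1}$ and $d(y,z) < 1/2^{n+1}$ give $d(x,z) < 1/2^n$. This makes $\doteq_B$ automatically a $\pi$-equivalence.

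Next I would check that $B(a;e)$ is a figure with respect to $\doteq_B$. If $y \in B(a;e)$, then $d(a,y) < e - 1/2^i$ for some $i \in \FN$; if also $x \doteq_B y$, then $d(x,y) < 1/2^{i+1}$, and the triangle inequality yields $d(a,x) < e - 1/2^i + 1/2^{i+1} = e - 1/2^{i+1}$, placing $x$ in $B(a;e)$. Since $B(a;e)$ is by construction a $\sigma$-class — a countable union of set-theoretically definable classes — its complement $C \setminus B(a;e)$ is a $\pi$-class and a figure (the text already observes that complements of figures are figures), hence closed by Theorem \ref{closure}; therefore $B(a;e)$ is open.

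The principal obstacle is showing that $\doteq_B$ is \emph{compact}, as required of an indiscernibility equivalence. Nothing in the metric axioms alone guarantees that every infinite set contains two distinct points at infinitesimal distance — a ``uniformly separated'' infinite set (e.g., the discrete metric) would violate this. To overcome it I would either (i) refine $\doteq_B$ by intersecting each $R_n$ with the $n$-th term $R_n^C$ of a generating sequence of $\doteq_C$, trying to transfer compactness via Theorem \ref{rnet} applied to upper bounds of $\doteq_C$, or (ii) exploit a compatibility between $d$ and the ambient continuum structure — so that $\doteq_C$-close points lie at infinitesimal metric distance — which would let $\doteq_B = \doteq_C$ serve and reduce compactness to the assumption that $\doteq_C$ is itself compact. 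Either route forces the argument to reconcile the metric data with the combinatorial compactness inherited from $\mathscr{C}$, and that reconciliation is where the real work lies.
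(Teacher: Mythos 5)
Your construction is sound as far as it goes: $(R_n)_{n\in\FN}$ with $R_n=\{\langle x,y\rangle : d(x,y)<1/2^n\}$ is a generating sequence of a $\pi$-equivalence, the triangle-inequality computation correctly shows $B(a;e)$ is a figure for $\doteq_B$, and since $B(a;e)$ is a $\sigma$-class its complement is a $\pi$-class figure, hence closed by Theorem \ref{closure}, making $B(a;e)$ open. The genuine gap is exactly the one you flag and then leave unresolved: compactness of $\doteq_B$. Without it $\doteq_B$ is not an indiscernibility equivalence, $\langle C,\doteq_B\rangle$ is not a continuum, and Theorem \ref{closure} is not even applicable. Moreover the gap is fatal for your particular choice of $R_n$, not merely unverified: already in the paradigmatic case $C=Q$ with $d(x,y)=|x-y|$, the set $\{0,1,\dots,\alpha\}$ for $\alpha\in N\setminus\FN$ is an infinite set containing no two distinct elements at infinitesimal distance, so your $\doteq_B$ fails compactness there. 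Neither of your two proposed repairs is carried out, so the proof is incomplete.

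For comparison, the paper uses a different relation, $R_n=\bigl\{\langle a,b\rangle\in C\times C : \bigl(d(a,b)<e-\tfrac{1}{2^n}\bigr)\vee\bigl(d(a,b)>2^n\bigr)\bigr\}$: the radius $e$ is built into the relation itself, and --- the essential point --- the disjunct $d(a,b)>2^n$ identifies all pairs at infinitely large distance. This is the same device used to define the real continuum $\mathscr{R}$, and it is precisely what defuses the counterexample above (in $\{0,1,\dots,\alpha\}$ one finds distinct elements at infinite distance, hence an indiscernible pair). The paper then exhibits $B(a;e)$ directly as the interior of $A=\{x\in C:d(a,x)<e\}$ in the resulting continuum, rather than arguing via the complement. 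That said, the paper's own proof is also terse: it asserts without argument that $\bigcap_n R_n$ is compact, and some compatibility hypothesis between $d$ and $C$ is still needed (an infinite set whose pairwise distances all lie between $1$ and $2$ would defeat both constructions). So your diagnosis of where the real work lies is accurate, but a complete proof must actually supply that work --- at minimum by adding the large-distance disjunct and an argument, or an explicit hypothesis, yielding compactness.
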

\begin{proof}
  Let $\left(R_n\right)_{n\in\FN}$ be a sequence of set-theoretically definable class as
  \[
  R_n\ =\ \left\{\langle a,b\rangle\in C\times C: \left(d(a,b)<e-\frac{1}{2^n}\right)\vee(d(a,b)>2^n)\right\}.
  \]
  Then, $\cap_{n\in\FN} R_n$ forms an indiscernibility equivalence $\doteq_B$.

  Let $\mathscr{C}_B$ be a continuum $\langle C,\doteq_B\rangle$.
  Given $a\in C$ and $e\in\FQ$, define a class $A=\{x\in C:d(a,x)< e\}$.
  Then, the ball $B(a;e)$ is given as $\intrr{C_\text{B}}{A}$.
\end{proof}

\section{Real Continua}
Let $\mathscr{R}=\langle Q,\doteq\rangle$ be a \textit{real continuum} where $\doteq$ is a indiscernibility equivalence defined as $\doteq\ \equiv\bigcap_{n\in\FN} R_n$ in which
\[ R_n=\left\{\langle a,b\rangle\in Q\times Q:\left(d(a,b)<\frac{1}{2^n}\right)\vee\left(d(a,b)>2^n\right)\right\},\]

and let $R=\BQ/\doteq$ be a class of real numbers.

Notice that a real continuum $\mathscr{R}$ contains $\infty\equiv \mon{}{\alpha}$ and $-\infty\equiv\mon{}{-\alpha}$ for $\alpha\in N\setminus\FN$ as its elements, while $R$ doesn't.

For each bounded rational $q\in \BQ$ there exists a unique real $\mon{}{q}\in R$.
Let us denote such a real simply as $q\in R$.
Conversely, for each real $r\in R$ there exists a bounded rational number $q\in \BQ$ which satisfies $r= \mon{}{q}$.
Let us also denote such a rational simply as $r\in \BQ$ for notational ease, hereafter.
Let $a, b\in R$ which satisfy $a<b$.
Then, the real intervals $(a,b]$, $(a,b)$, $[a,b]$ and $[a,b)$ are given respectively as
\begin{eqnarray*}
  (a,b]\ &=&\ \cl{}{\{q\in Q:a<q<b\}}\setminus\mon{}{a}\\[.18cm]
  &=&\ \intrr{}{\{q\in Q:a<q<b\}}\cup\mon{}{b}\\[.18cm]
  (a,b)\ &=&\  \intrr{}{\{q\in Q:a<q<b\}}\\[.18cm]
  [a,b]\ &=&\ \cl{}{\{q\in Q:a<q<b\}}\\[.18cm]
  [a,b)\ &=&\ \cl{}{\{q\in Q:a<q<b\}}\setminus\mon{}{b}\\[.18cm]
    &=&\ \intrr{}{\{q\in Q:a<q<b\}}\cup\mon{}{a}
\end{eqnarray*}

To make sure that this construction of $R$ is \textit{really} identical with that of real numbers, let us next examine its characteristics of algebraic structures.

Arithmetic operations on $R$ are given as same as that on $Q$, that is: for all $a,b\in \BQ$
\begin{eqnarray*}
\mon{}{a}+\mon{}{b} & =& \mon{}{a+ b}\\
\mon{}{a}\cdot\mon{}{b} & =& \mon{}{a\cdot b}.
\end{eqnarray*}
An ordered relation is also given as the same manner, that is: for all $a,b\in \BQ$
\[
\mon{}{a}\leq\mon{}{b}\ \Leftrightarrow\ a\leq b.
\]
It is evident that $(R,\leq,+,\cdot)$ constitutes an ordered field.

\begin{defn}
  $D\subseteq R$ is called \textit{Archimedean} iff for each $x\in D$ there exists an $n\in\FN$ such that $x<\mon{}{n}$.
  Otherwise, $D$ is called \textit{non-Archimedean}.
\end{defn}

\begin{thm}
$R$ is an Archimedean.
\end{thm}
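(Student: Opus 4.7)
The plan is to unpack the definition of $R$ as $\BQ/\doteq$ and translate the Archimedean property for reals into a statement about bounded rationals that is essentially immediate from the definition of $\BQ$.

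First, I would fix an arbitrary $x\in R$ and pick a representative $q\in\BQ$ with $x=\mon{}{q}$, which is possible by the convention established just before the theorem. By the very definition of $\BQ$, there exists some $i\in\FN$ such that $|q|\leq i$. The natural choice of witness is then $n=i+1$ (or any finite natural strictly greater than $i$), and the goal reduces to showing $\mon{}{q}<\mon{}{n}$ in $R$.

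Since the order on $R$ is defined via representatives, $q\leq n$ is immediate from $q\leq i<n$. The only nontrivial point is the strictness, i.e., that $q\not\doteq n$, because the order on $R$ was defined as the non-strict one transferred from $\BQ$. For this I would appeal directly to the generating sequence $R_k$ of $\doteq$: we have $d(q,n)=n-q$, which satisfies $1\leq d(q,n)\leq 2i+1$, a value lying between two finite rationals. Hence for any $k\in\FN$ with $2^k>2i+1$ (and automatically $1/2^k<1\leq d(q,n)$), the pair $\langle q,n\rangle$ fails both disjuncts defining $R_k$, so $\langle q,n\rangle\notin R_k$ and therefore $q\not\doteq n$.

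There is no real obstacle here: the proof is essentially the observation that ``bounded'' in $\BQ$ already means ``bounded by a finite natural number,'' which is exactly the Archimedean condition. The only thing one must be careful about is not confusing $\BQ$ (bounded by some element of $\FN$) with the ambient $Q$, which contains elements such as the $\alpha\in N\setminus\FN$ used just above in the definition of $\infty$; those are precisely the rationals excluded when passing from $Q/\doteq$ to $R=\BQ/\doteq$, and their exclusion is exactly what makes the Archimedean property hold.
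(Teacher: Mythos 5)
Your proposal is correct and follows essentially the same route as the paper: pick a representative $q\in\BQ$ of the given real, use the defining bound $|q|\leq i$ for some $i\in\FN$ to produce a finite natural $n>q$, and conclude $x<\mon{}{n}$. The only difference is that you additionally verify the strictness of the inequality in the quotient (that $q\not\doteq n$, via the generating sequence $R_k$), a small point the paper's proof leaves implicit; this is a welcome bit of extra care but not a different argument.
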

\begin{proof}
  For any given $a\in R$ there exists $q\in\BQ$ such that $a=\mon{}{q}$.
  Since $q$ is bounded there always exists $n\in \FN$ in which $n>q$.
  It implies that $a<\mon{}{n}$.
\end{proof}

Let $D\subseteq R$. 
The element $\ell\in D$ is an \textit{upper bound} of the nonempty class $A\subseteq D$ iff $x\leq \ell$ for all $x\in A$.
Moreover, if no $m\in D$ for which $m<\ell$ is an upper bound of $A$, $\ell$ is said to be the \textit{least upper bound} of $A$.

\begin{defn}
  $D$ is \textit{complete} iff every nonempty subclass of $D$ that has an upper bound has a least upper bound.
\end{defn}

\begin{thm}
  $R$ is a complete ordered field.
\end{thm}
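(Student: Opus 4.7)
My plan is to prove completeness by a dyadic bisection in $\BQ$ and then verify that the resulting monad is the least upper bound. Given a nonempty $A\subseteq R$ with an upper bound in $R$, I would pass to positions: fix $a_0\in\BQ$ with $\mon{}{a_0}\in A$, and fix $b_0\in\BQ$ strictly larger than every position of every element of $A$ (which exists because $A$ is bounded above). Writing $\tilde A=\{q\in\BQ:\mon{}{q}\in A\}$, so $a_0\in\tilde A$ and every $\alpha\in\tilde A$ satisfies $\alpha<b_0$.

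Next I would construct recursively two sequences $(a_n)_{n\in\FN}$ and $(b_n)_{n\in\FN}$ in $\BQ\cap[a_0,b_0]$ by bisection: with $m_n=(a_{n-1}+b_{n-1})/2$, if $m_n\geq\alpha$ for all $\alpha\in\tilde A$ set $a_n=a_{n-1}$, $b_n=m_n$; otherwise set $a_n=m_n$, $b_n=b_{n-1}$. The construction maintains the invariants that $b_n>\alpha$ for every $\alpha\in\tilde A$, that some $\alpha\in\tilde A$ satisfies $\alpha\geq a_n$, and that $b_n-a_n=(b_0-a_0)/2^n$. Hence for each $k\in\FN$ there is $n\in\FN$ with $|a_m-a_{m'}|<1/2^k$ for all $m,m'\geq n$, so both sequences are Cauchy in the AST sense. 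Using prolongation to extend the $\FN$-indexed sequences to set-indexed ones inside $[a_0,b_0]$ and invoking compactness of $\doteq$ on this bounded interval, I would extract a common limit position $c\in\BQ$ with $\lim_{n\in\FN}\mon{}{a_n}=\lim_{n\in\FN}\mon{}{b_n}=\mon{}{c}$.

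Finally I would verify that $\mon{}{c}$ is the least upper bound of $A$. It is an upper bound because any $\alpha\in\tilde A$ satisfies $\alpha<b_n$ for every $n$, and $|b_n-c|<1/2^k$ eventually for each $k\in\FN$, whence $\alpha\leq c+1/2^k$ for every $k$, which forces $\mon{}{\alpha}\leq\mon{}{c}$. For minimality, if $\mon{}{c'}<\mon{}{c}$ were another upper bound then $c-c'>1/2^k$ for some $k\in\FN$; choosing $n\in\FN$ with $a_n>c-1/2^{k+1}>c'$, the second invariant supplies $\alpha\in\tilde A$ with $\alpha\geq a_n>c'$ by a noninfinitesimal margin, so $\mon{}{\alpha}>\mon{}{c'}$, contradicting the assumption that $\mon{}{c'}$ bounds $A$.

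The principal obstacle is the AST-specific step of producing a limit for the Cauchy bisection, since ``Cauchy implies convergent'' in $\langle\BQ,\doteq\rangle$ is essentially the completeness being proved. This is resolved by combining the prolongation axiom (which extends the countable sequence to one indexed by a set $\alpha\in N\setminus\FN$) with the compactness of the indiscernibility equivalence $\doteq$ restricted to the set interval $[a_0,b_0]$, so Theorem \ref{acc} supplies an accumulation point that, by the Cauchy property, coincides with the actual limit.
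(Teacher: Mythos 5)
Your proposal is correct in substance---each AST-specific step you flag can be carried out---but it takes a genuinely different route from the paper at the decisive point. The paper also uses a dyadic construction, but a one-sided one: starting from an upper-bound position $c_0=b\in B$ it greedily subtracts $\frac{b-a}{2^{i+1}}$ whenever the result stays in $B$, and, crucially, it runs this recursion not over $\FN$ but up to an infinite index $\tau\in N\setminus\FN$. The least upper bound is then simply $\mon{}{c_\tau}$, read off at the infinite stage: the increments are infinitesimal for indices in $\tau\setminus\FN$, so the monad has stabilized, and minimality follows from a finite-stage combinatorial argument (if some $z\in B$ lay a noninfinitesimal amount below $c_\tau$, the construction would have had to refuse, at some finite $m\le\ell$, a step it was obliged to take). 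Consequently the paper never forms a Cauchy sequence and never needs ``Cauchy implies convergent''; evaluating the recursion at an infinite natural is the AST-native substitute for taking a limit. Your version keeps the bisection at finite indices and then confronts exactly the circularity you identify, resolving it with prolongation plus compactness. That resolution is sound: prolong $(a_n)_{n\in\FN}$ to a set function, note that the figure of its bounded range is closed by Theorem \ref{closure} and hence compact by Theorem \ref{comclo}, extract a convergent subsequence by the sequential-compactness theorem of the Compactness section, and use the Cauchy property to promote this to convergence of the whole sequence---though I would route the argument through that theorem rather than through Theorem \ref{acc}, which only characterizes accumulation points once one is already known to exist. What each approach buys: yours is the classical bisection argument transplanted into AST with the convergence machinery made explicit and honestly accounted for; the paper's is shorter and more idiomatic, trading all of that machinery for a single evaluation at an infinite index, at the price of relying on a recursion of infinite length whose defining condition involves the class $B$.
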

\begin{proof}
  Let us check that $R$ is complete.
  Let $C\subseteq R$, $C\ne \emptyset$ and $C$ has an upper bound in $R$.

  Let $B=\{b\in Q: \mon{}{b}\text{ is an upper bound of } C\}$.
  Since $R$ is Archimedean, for all $x\in C$ there exists $q\in Q$ and $n\in\FN$ satisfying that $x=\mon{}{q}<\mon{}{n}$,
  so that $B\ne\emptyset$.
  Conversely, let $A=Q\setminus B$.
  Then $A$ is not empty too since for every $x\in C$ there exists $q\in A$ such that $x=\mon{}{q}$.

  Choose $a\in A$ and $b\in B$ arbitrarily.
  Let $c_0=b$, and
  \[
  c_{i+1}\ =\
  \begin{cases}
    c_i+\frac{a-b}{2^{i+1}} & \text{ if}\quad c_i+\frac{a-b}{2^{i+1}}\in B\\
    c_i & \text{ otherwise}
  \end{cases}
  \quad\text{ for all } i\in\tau+1\in N\setminus\FN.
  \]
  Then, since $c_{\alpha+1}- c_\alpha\leq\frac{a-b}{2^{\alpha+1}}\doteq 0$ for all $\alpha\in\tau\setminus\FN$, the equation holds:
  \[
  \mon{}{c_\alpha}=\mon{}{c_\tau}.
  \]
  Suppose that there exists $z\in B$ which satisfies $z<c_\tau$ and $z\not\doteq c_\tau$.
  Then, there exists $\ell\in\FN$ which satisfies
  \[
  \frac{a-b}{2^{\ell}}\ <\ z-c_\tau \ \leq\ \frac{a-b}{2^{\ell+1}}.
  \]
  It implies that there exists $m\leq \ell$ which satisfies
  $c_{m}+\frac{a-b}{2^{m+1}}\in B$ but $c_{m+1}=c_m$.
  It contradicts with the way $c_i$ is built.

  Therefore, it is concluded that $\mon{}{c_\tau}$ is the least element of $B/\hspace{-.144cm}\doteq$, that is, the least upper bound of $C$.
\end{proof}

The theorem assures us that the construction of $R$ in AST is isomorphic to, say, $\mathbb{R}$ of \textit{ZFC}.

\section{Morphisms}
Let $\mathscr{C}_1$ and $\mathscr{C}_2$ be continua, and $F:C_1\rightarrow C_2$ be a function.
$F$ is \textit{continuous} iff for all $a,b\in C_1$ which are mutually indiscernible, that is, $a\doteq_{C_1} b$, $F(a)$ and $F(b)$ are also indiscernible, $F(a)\doteq_{C_2} F(b)$.
Two continuous functions $F$ and $G$ are \textit{indiscernible}, denoted simply as $F\doteq G$, if for all $x\in C_1$, $F(x)\doteq G(x)$ follows\footnote{
  To be precise,  put
  \[
  r_i\ \equiv\
  \left\{\langle f,g\rangle;\,
  \left(f,g\in \Fun{c_1}{c_2}\right)
  \wedge\left(\forall x\in c_1\right)\left(f(x)-g(x)\leq\frac{1}{2^i}\right)
  \right\},
  \]
  in which $C_i\subseteq c_i$ for $i=1,2$ when $C_1$ is a semiset, otherwise $H(C_i)\subseteq c_i$ for a given similarity endomorphism $H:V\rightarrow D$ in which $D$ is a semiset (for the definition and its existence see p.111 of Vop\v{e}nka \cite{ast}), and $\doteq_{c_2^{c_1}}\ \equiv\ \bigcap_{i\in\FN} {r}_i$.

  For each pair of functions $F,G\in\Fun{C_1}{C_2}$,
  let us denote $F\doteq G$ iff there exists their (or their similar classes') prolonged sets of functions $f,g\in\Fun{c_1}{c_2}$ which satisfies $F=f\upharpoonright C_1$ (or $H(F)=f\upharpoonright H(C_1)$), $G=f\upharpoonright C_1$ (or $H(G)=g\upharpoonright H(C_2)$) and $f\doteq_{c_2^{c_1}} g$.
}.

The \textit{morphism} $\mathscr{F}$ between two continua is defined as follows\footnote{Contrary to the framework of Tsujishita \cite{tjst}, in which the morphisms are not guaranteed to be classes, they are in AST.}.
\begin{enumerate}[(1)]
\item A \textit{morphism} from $\mathscr{C}_1$ to $\mathscr{C}_2$ is a monad $\mon{}{F}$ denoted simply as $[F]$ for some continuous function $F$ from $C_1$ to $C_2$.
\item If $\mathscr{C}_i$ ($i=1,2$) are continua, the notation  $\mathscr{F}:\mathscr{C}_1\rightarrow \mathscr{C}_2$ means that $\mathscr{F}$ is a morphism from $\mathscr{C}_1$ to $\mathscr{C}_2$ .
\item If $\mathscr{F}$ is a morphism, then the expression $G\in \mathscr{F}$ means $G\doteq F$ in which $\mathscr{F}=[F]$.
  If $G\in \mathscr{F}$, we say that the morphism $\mathscr{F}$ is \textit{represented by} $G$ and $G$ \textit{represents} $\mathscr{F}$.
\item If $\mathscr{F}$ and $\mathscr{G}$ are morphisms represented respectively by $F$ and $G$, then the expression $\mathscr{F}=\mathscr{G}$ means $F\doteq G$.
\end{enumerate}
It is essential that morphisms are defined as the monads of continuous function.
When a set-theoretically definable function $F:C_1\rightarrow C_2$ has an indiscernible gap at a position $x\in C_1$, that is, $\neg(F(x)\doteq F(y))$ for some $y\doteq x$, its value at its point $\mon{C_\text{1}}{x}$ cannot be determined uniquely since $x\doteq_{C_1} y$ but $F(y)\not\doteq_{C_2} F(x)$, thus, $\mon{C_\text{2}}{F(x)}\cap \mon{C_{\text{2}}}{F(y)}=\emptyset$.

It is also worth mentioning that continuity of $F$ cannot guarantee its morphism $\mathscr{F}$'s \textit{continuity}, which will be defined later, since it does not guarantee continuous change at its value outside its monad.
For example, the morphism $\mathscr{F}:\mathscr{R}\rightarrow\mathscr{R}$ is not continuous.
\[
\mathscr{F}(x)\ =\ \begin{cases}
1 & \text{ if }\quad x=0\\
0 & \text{ otherwise}
\end{cases}
\]
but any function $F$ which satisfies $[F]=\mathscr{F}$ is continuous since for each $q\in Q$, it is satisfied that $F(q)=0$ except $F(q)=1=F(0)$ when $q\doteq 0$.
To capture morphism's continuity, stronger conditions are needed.

\section{Continuous Morphisms}
Let $\mathscr{F}:\mathscr{C}_1\rightarrow\mathscr{C}_2$ be a morphism from $\mathscr{C}_1$ to $\mathscr{C}_2$.
For simplicity, let $F$ denote a representative of $\mathscr{F}=[F]$ hereafter.

A morphism $\mathscr{F}:\mathscr{C}_1\rightarrow\mathscr{C}_2$ is \textit{continuous} iff
\begin{equation}\label{connect}
  \left(\forall u\subseteq \dom{F}\right)
  \left(
  \Cntd{1}{u}\Rightarrow\Cntd{2}{{F}(u)}
  \right).
\end{equation}

Another way of defining continuity can be given by way of a motion.
A function $\delta_1$ is a \textit{motion of a position} in a time $\theta$ of $\mathscr{C}_1$, in which $\theta\in N$, iff $\dom(\delta_1)=\theta+1$ and for each $\alpha<\theta$, $\delta_1(\theta)\doteq_1 \delta_1(\theta+1)$.
If $\delta_1$ is a motion of a position then $\rng(\delta_1)$ is called a \textit{trace} of $\delta_1$.

Then, a morphism $\mathscr{F}:\mathscr{C}_1\rightarrow\mathscr{C}_2$ is \textit{continuous} iff
\begin{equation}\label{trace}
  \left(\forall \delta_1\right)
  \left(
  \left(\rng{(\delta_1)}\subseteq \dom{(F)}\right)
  \Rightarrow
  \left(
  {F}(\delta_1) \text{ is a motion of a position of $\mathscr{C}_2$}
  \right)
  \right).
\end{equation}

It is easy to examine these two conditions (\ref{connect}) and (\ref{trace}) are equivalent.
To make sure that it is true, two theorems of Vop\v{e}nka at the Section 1  of Chapter 4 are useful.

\begin{thm}[the first Theorem of Vop\v{e}nka \cite{ast} at Section 1 of Chapter 4]\label{vop41-1}
The trace of a motion of a position is a connected set.
\end{thm}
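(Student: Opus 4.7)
The plan is to extract, from any separation of the trace into a nonempty proper subset $v$ and its complement, an adjacent pair of indices in the time domain whose images sit on opposite sides of the partition yet are indiscernible. I would let $\delta_1$ be a motion of a position in time $\theta$, write $u=\rng(\delta_1)$, and take a nonempty proper subset $v$ of $u$. Set $A=\{\alpha\leq\theta:\delta_1(\alpha)\in v\}$ and $B=\{\alpha\leq\theta:\delta_1(\alpha)\in u\setminus v\}$; both are nonempty because $v$ and $u\setminus v$ are, and together they partition $\theta+1$.

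Next I would exploit the well-ordering of $\theta+1$. By swapping the roles of $v$ and $u\setminus v$ if necessary, assume $0\in A$. Let $\beta$ be the least element of $B$; since $0\in A$ we have $\beta>0$, so $\beta$ has an immediate predecessor $\gamma$ with $\beta=\gamma+1$. By minimality of $\beta$, the index $\gamma$ lies in $A$, so $\delta_1(\gamma)\in v$ and $\delta_1(\gamma+1)\in u\setminus v$, while the motion condition gives $\delta_1(\gamma)\doteq_{C_1}\delta_1(\gamma+1)$. These are exactly the witnesses required by the definition of \emph{connected set}.

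The only nontrivial point is the use of an immediate predecessor for $\beta$: since $\theta$ can be an infinite natural number in $N\setminus\FN$, one has to rely on the fact that in AST every nonzero element of $N$ is a successor, so no ``limit'' stage can interrupt the boundary argument. Granted this structural fact about $N$, the remainder of the proof is a direct unwinding of the definitions of motion, trace, and connectedness, with no further constructions needed.
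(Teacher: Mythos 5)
Your argument is correct; the paper itself states this theorem without proof (it is quoted from Vop\v{e}nka), and your boundary-crossing argument is the standard one. Two points you touch on deserve to be fully explicit in an AST setting: the least element $\beta$ of $B$ exists because $v$ is a \emph{set} (the definition of connectedness quantifies only over subsets $v$ of $u$) and $\delta_1$ is a set function, so $B$ is a set-theoretically definable subclass of the set $\theta+1$ and hence itself a set, to which the least number principle applies even when $\theta\in N\setminus\FN$; and, as you correctly note, every nonzero element of $N$ is a successor, so the predecessor $\gamma$ exists and no limit stage can occur. (The paper's clause ``for each $\alpha<\theta$, $\delta_1(\theta)\doteq_1\delta_1(\theta+1)$'' is evidently a typo for $\delta_1(\alpha)\doteq_1\delta_1(\alpha+1)$, which is the condition your proof uses.)
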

\begin{thm}[the second Theorem of Vop\v{e}nka \cite{ast} at Section 1 of Chapter 4]\label{vop41-2}
For each nonempty connected set $u$ there is a motion of a position such that $u$ is the trace of $\delta$.
\end{thm}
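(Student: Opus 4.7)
The plan is to construct $\delta$ by recursion, extending a finite motion one new element of $u$ at a time until the range covers all of $u$. Fix a starting position $x_0 \in u$ (possible since $u \neq \emptyset$), and build a chain of motions $\delta_k \colon \theta_k + 1 \to C$ for $k = 1, \ldots, |u|$ whose ranges $v_k := \rng(\delta_k)$ satisfy $v_k \subseteq u$ and $|v_k| = k$; the final motion $\delta := \delta_{|u|}$ then has trace $u$.

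The base case $k = 1$ is the singleton motion $\delta_1(0) = x_0$ in time $\theta_1 = 0$, which is vacuously a motion. For the recursive step, given $\delta_k$ with $v_k \subsetneq u$, the connectedness of $u$ supplies $x \in v_k$ and $y \in u \setminus v_k$ with $x \doteq_C y$. Writing $x = \delta_k(j)$, I extend $\delta_k$ by backtracking from $\delta_k(\theta_k)$ along the already-built sequence down to $\delta_k(j) = x$, and then stepping to $y$:
\[
\delta_{k+1}(\alpha) =
\begin{cases}
\delta_k(\alpha), & 0 \leq \alpha \leq \theta_k, \\
\delta_k(2\theta_k - \alpha), & \theta_k < \alpha \leq 2\theta_k - j, \\
y, & \alpha = 2\theta_k - j + 1,
\end{cases}
\]
with $\theta_{k+1} = 2\theta_k - j + 1$. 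Each consecutive transition respects $\doteq_C$: forward portions inherit it from $\delta_k$, backtracking portions from the symmetry of $\doteq_C$, and the last step from the chosen pair $x \doteq_C y$. Hence $\delta_{k+1}$ is a motion with $\rng(\delta_{k+1}) = v_k \cup \{y\}$, so $|v_{k+1}| = k + 1$.

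The main subtlety is that in AST the cardinality $|u|$ may be an infinite natural number, so the construction must be formalized as a set recursion on $k \in |u| + 1$ — choosing the witness pair $\langle x, y\rangle$ at each stage via a fixed well-ordering — rather than as a finite induction. One must also verify that the times stay in $N$ throughout: the worst-case recurrence $\theta_{k+1} \leq 2\theta_k + 1$ gives $\theta_k \leq 2^{k-1}$, which lies in $N$ for all $k \in N$ by closure of $N$ under exponentiation. Once these formalities are in place, termination at $k = |u|$ yields $\rng(\delta_{|u|}) = u$, completing the proof.
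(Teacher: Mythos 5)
The paper offers no proof of this statement---it is quoted from Vop\v{e}nka---so your argument has to stand on its own. Its combinatorial core is fine and is essentially the standard construction: grow the trace one point at a time, using connectedness of $u$ to produce $x\in v_k$ and $y\in u\setminus v_k$ with $x\doteq_C y$, then splice $y$ on after backtracking along the already-built motion to the occurrence of $x$. The verification that each consecutive pair of values is $\doteq_C$-related is correct, and the time bound $\theta_k\leq 2^{k-1}$ causes no trouble since $N$ is closed under exponentiation.

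The gap is in the sentence where you appeal to ``a set recursion on $k\in|u|+1$.'' The recursion you describe is \emph{not} a set recursion: its defining clause selects at stage $k$ a witness pair $\langle x,y\rangle$ with $x\doteq_C y$, and $\doteq_C$ is a $\pi$-class that is in general not set-theoretically definable---that is the whole point of indiscernibility equivalences. The induction and recursion principles of AST for natural numbers apply only to set formulas; for class formulas they fail ($\FN$ itself is the canonical counterexample). So your construction is licensed only for $k$ ranging over $\FN$, i.e., for $u$ with $\Fin(u)$, whereas the theorem must cover $|u|\in N\setminus\FN$: the traces produced by Theorem \ref{vop41-1} typically have infinite cardinality, and those are exactly the connected sets needed for the equivalence of (\ref{connect}) and (\ref{trace}). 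The missing, AST-specific step is to replace $\doteq_C$ by a set-definable surrogate \emph{before} recursing. With $(R_n)_{n\in\FN}$ a generating sequence, the sets $T_n=R_n\cap(u\times u)$ form a descending countable sequence of sets, and ``for every nonempty proper subset $v$ of $u$ there are $x\in v$, $y\in u\setminus v$ with $\langle x,y\rangle\in T$'' is a set formula satisfied by every $T_n$; prolonging $n\mapsto T_n$ to a set sequence and applying overspill yields a single set $T$ with $T\subseteq T_n$ for all $n\in\FN$ (so every pair in $T$ is $\doteq_C$-related) such that $u$ is still $T$-connected. Running your recursion with $T$ in place of $\doteq_C$ is then a genuine set recursion of length $|u|$, and the resulting $T$-motion is a motion. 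Without this prolongation/overspill step the proof does not go through for sets of infinite cardinality.
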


As easily seen by Theorem \ref{vop41-1}, (\ref{trace}) $\Rightarrow$ (\ref{connect}) is evident.
 (\ref{connect}) $\Rightarrow$ (\ref{trace}) is also evident by Theorem \ref{vop41-2}.

Let us next examine relationship between motions and sequences.
It is easily verified that within a connected continuum these two concepts coincide.
\begin{thm}\label{poseq}
  Let $\mathscr{C}$ be a connected continuum.
  Then, following two conditions are equivalent:
  \begin{enumerate}[(1)]
  \item there exists a motion $\delta:\tau+1\rightarrow C$ which satisfies $\delta(0)=a_0$ and $\delta(\tau)=a$,
  \item there exists a sequence $(\mon{C}{a_i})_{i\in\FN}$ with its limit $\lim_{i\in\FN}\mon{C}{a_i}=\mon{C}{a}$.
  \end{enumerate}
\end{thm}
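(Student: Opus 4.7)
The plan is to establish the two implications separately, using only the finite transitivity of $\doteq_C$, Theorem~\ref{vop41-2}, and the corollary that a connected continuum has connecting sets for any pair of positions.

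For $(1)\Rightarrow(2)$, the key observation is that along the motion $\delta:\tau+1\to C$, the equivalence property of $\doteq_C$ gives by finite transitivity $\delta(\tau-i)\doteq_C\delta(\tau)=a$ for every $i\in\FN$ (interpreting $\tau-i:=0$ when $i>\tau$, a case that can arise only for $\tau\in\FN$ and then yields the stronger $\delta(0)=a_0\doteq_C a$). Defining $a_i:=\delta(\tau-i)$ for $i\in\FN$, every element of the resulting sequence $(\mon{C}{a_i})_{i\in\FN}$ equals $\mon{C}{a}$, so $\langle a_j,a\rangle\in R_k$ holds for all $j,k\in\FN$, and convergence to $\mon{C}{a}$ is immediate from the definition.

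For $(2)\Rightarrow(1)$, I would stitch together short motions between consecutive terms of the sequence. Because the connected continuum $\mathscr{C}$ has no nontrivial clopen figures, the corollary at the end of the connectedness section provides, for each $i\in\FN$, a connected set $u_i\subseteq C$ containing both $a_i$ and $a_{i+1}$. Theorem~\ref{vop41-2} then supplies a motion $\delta_i:\theta_i+1\to C$ with $\rng(\delta_i)=u_i$, which after reparametrization may be taken with $\delta_i(0)=a_i$ and $\delta_i(\theta_i)=a_{i+1}$. The AST prolongation axiom extends the countable family $(\delta_i)_{i\in\FN}$ to a set-indexed family $(\delta_i)_{i\leq\nu}$ for some $\nu\in N\setminus\FN$, whose concatenation is a single motion $\delta':\sigma+1\to C$ with $\delta'(0)=a_0$. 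Convergence of the original sequence to $\mon{C}{a}$ forces $\delta'(\sigma)\doteq_C a$, so appending one further connecting motion (again via the corollary and Theorem~\ref{vop41-2} on a connected set joining $\delta'(\sigma)$ to $a$) yields the desired motion $\delta:\tau+1\to C$ with $\delta(\tau)=a$.

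The main obstacle I anticipate is the prolongation and concatenation step in $(2)\Rightarrow(1)$. The lengths $\theta_i$ may themselves be infinite natural numbers, so the concatenated domain $\sigma+1$, morally $\sum_{i\leq\nu}(\theta_i+1)$, must be exhibited as an actual element of $N$ rather than a proper class; this is exactly what the prolongation axiom guarantees once the entire construction is carried out inside the set-theoretic universe. Once the domain is a natural number in $N$, verifying the motion condition $\delta'(\alpha)\doteq_C\delta'(\alpha+1)$ is automatic: inside each piece it is inherited from the constituent $\delta_i$, and at each boundary the two pieces meet at the common position $a_{i+1}$.
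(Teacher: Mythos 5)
Your $(2)\Rightarrow(1)$ direction is correct but takes a far longer route than the paper, which simply observes that in a connected continuum \emph{any} two positions are joined by a motion (via the corollary on connected sets together with Theorem~\ref{vop41-2}), so hypothesis (2) is barely used at all. Your concatenation-plus-prolongation construction is not wrong in outline, but its two hardest steps --- extending the countable family $(\delta_i)_{i\in\FN}$ to a set-indexed family that is still a chain of matching motions (this needs an overspill argument on top of the prolongation axiom), and the claim that $\delta'(\sigma)\doteq_C a$ (which requires choosing the infinite cut-off $\nu$ below a whole countable family of infinite bounds) --- are exactly the parts you leave unargued. They are also redundant: your final step already appends a motion from $\delta'(\sigma)$ to $a$ using nothing but connectedness, and that same appeal applied directly to $a_0$ and $a$ is the paper's entire proof.

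The genuine weak spot is $(1)\Rightarrow(2)$. Setting $a_i:=\delta(\tau-i)$ and invoking finite transitivity of the equivalence $\doteq_C$ does give $a_i\doteq_C a$ for every $i\in\FN$, but for that very reason your sequence is constant up to monads: every term equals $\mon{C}{a}$. This clashes with the theorem's notation, which ties the first term $\mon{C}{a_0}$ of the sequence in (2) to the starting position $a_0=\delta(0)$ of the motion in (1); your $a_0$ is $\delta(\tau)=a$, and $\delta(0)\doteq_C a$ fails for a typical motion, since monads are escaped only after infinitely many steps --- which is precisely what your transitivity observation demonstrates. Even under the loosest reading of (2), a constant sequence witnesses it vacuously. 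The paper instead samples $a_i\in Z_i(a)\cap\rng(\delta)$, i.e.\ positions on the trace lying in the shrinking $R_i$-neighborhoods of $a$; since $Z_0(a)=C$, the first term can be taken to be $\delta(0)=a_0$, and the resulting sequence genuinely tracks the motion into $\mon{C}{a}$. You should replace your reindexing by this sampling.
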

\begin{proof}
  To show (1) $\Rightarrow$ (2), arbitrarily choose one position $a_i\in Z_i(a)\cap\rng{(\delta)}$ for each $i\in\FN$.
  Then, the sequence $\left(\mon{C}{a_i}\right)_{i\in\FN}$ converges to $\mon{C}{a}$ since for each $i\in\FN$ it has a corresponding position $a_i$ satisfying $\langle a_i,a\rangle\in R_i$.

  To show a converse case is straightforward.
  Since $\mathscr{C}$ is connected, between any two positions, there exists motions.
  Thus, motion from $a_0$ to $a$ always exists.
\end{proof}
By Teorem \ref{poseq}, it is equivalent to define continuity by way of converging sequences:
a morphism $\mathscr{F}:\mathscr{C}_1\rightarrow\mathscr{C}_2$ is \textit{continuous} iff for all converging sequences, say $\lim_{i\in\FN}\left(\mon{C_\text{1}}{a_i}\right)=\mon{C_\text{1}}{a}$, the following holds:

\[ \lim_{i\in\FN}\mon{C_\text{2}}{F(a_i)}=\mon{C_\text{2}}{F(a)}. \]
Or equivalently,
\[
(\forall k\in\FN)
(\exists j\in\FN)
((\langle a_i,a\rangle\in R_{1_j})\Rightarrow (\langle F(a_i),F(a)\rangle\in R_{2_k})).
\]

The well known property of continuity described below also holds.

\begin{thm}\label{cont}
  Let $\mathscr{C}_1=\langle C_1,\doteq_1\rangle$ and $\mathscr{C}_2=\langle C_2,\doteq_2\rangle$ be two continua, $C_1$ and $C_2$ be set-theoretically definable class, and $\mathscr{F}:\mathscr{C}_1\rightarrow\mathscr{C}_2$ be a morphism.
  Then, the following three conditions are equivalent.
  \begin{enumerate}[(i)]
  \item For any open class $X$ of $\mathscr{C}_2$, $F^{-1}(X)$ is also an open class of $\mathscr{C}_2$.
  \item For any closed figure $Y$ of $\mathscr{C}_2$, ${F}^{-1}(Y)$ is also a closed figure of $\mathscr{C}_2$.
    \item $\mathscr{F}$ is continuous.
  \end{enumerate}
\end{thm}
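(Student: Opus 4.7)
The plan is to establish the three conditions as equivalent in the cycle $(iii)\Rightarrow(ii)\Rightarrow(iii)$ after observing that $(i)\Leftrightarrow(ii)$ is immediate from the identity $F^{-1}(C_2\setminus Y)=C_1\setminus F^{-1}(Y)$ together with the defining correspondence between open classes and closed figures. Throughout I fix a representative $F\in\mathscr{F}$; since morphisms are monads of continuous functions (and any $G\doteq F$ is then also continuous as a function), $a\doteq_1 b$ implies $F(a)\doteq_2 F(b)$, from which it follows at once that $F^{-1}(Y)$ is a figure of $\mathscr{C}_1$ whenever $Y$ is a figure of $\mathscr{C}_2$.

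For $(iii)\Rightarrow(ii)$, let $Y$ be a closed figure of $\mathscr{C}_2$. I invoke the sequence characterization of closed figures (Theorem \ref{clo}): given a converging sequence $(\mon{C_1}{a_i})_{i\in\FN}$ of positions in $F^{-1}(Y)$ with limit $\mon{C_1}{a}$, the sequence form of morphism-continuity derived just above the theorem yields $\mon{C_2}{F(a_i)}\to\mon{C_2}{F(a)}$ in $\mathscr{C}_2$; since each $F(a_i)\in Y$ and $Y$ is closed, Theorem \ref{clo} applied in $\mathscr{C}_2$ gives $F(a)\in Y$, hence $a\in F^{-1}(Y)$, so $F^{-1}(Y)$ is closed.

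For $(ii)\Rightarrow(iii)$ I argue by contrapositive. Suppose $\mathscr{F}$ is not continuous; then there is a converging sequence $\mon{C_1}{a_i}\to\mon{C_1}{a}$ whose image does not converge to $\mon{C_2}{F(a)}$, and after thinning one may fix $k\in\FN$ with $\langle F(a_i),F(a)\rangle\notin R_{2_k}$ for every $i$. Set $Z=Z_{k+1}(F(a))$, which is set-theoretically definable, and let $Y=C_2\setminus\intrr{C_2}{Z}$, which is a closed figure. The transitivity condition (2) in the definition of generating sequence makes $Z$ separate $\{F(a)\}$ from $C_2\setminus Z_k(F(a))$, placing $F(a)$ in $\intrr{C_2}{Z}$ and hence outside $Y$; meanwhile each $F(a_i)$ lies outside $Z_k(F(a))\supseteq Z\supseteq\intrr{C_2}{Z}$, so $F(a_i)\in Y$. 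Therefore $(\mon{C_1}{a_i})_{i\in\FN}$ is a converging sequence in $F^{-1}(Y)$ whose limit $a$ lies outside $F^{-1}(Y)$, contradicting closedness of $F^{-1}(Y)$ via Theorem \ref{clo}.

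The main obstacle is precisely this last construction: turning a single numerical witness $k$ of non-continuity into a genuine closed figure in $\mathscr{C}_2$ whose preimage detects the failure. The delicate point is placing $F(a)$ strictly in the interior of the chosen set-theoretically definable neighborhood while keeping each $F(a_i)$ outside it, which is managed by the small index shift from $k$ to $k+1$ together with the transitivity clause of generating sequences.
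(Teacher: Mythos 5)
Your reduction of (i)$\,\Leftrightarrow\,$(ii) to the identity $F^{-1}(C_2\setminus Y)=C_1\setminus F^{-1}(Y)$ matches the paper, but for the two substantive implications you take a genuinely different, sequence-based route, and that is where a real gap sits. Both of your arguments pass through the sequential characterization of morphism continuity and through the sequential test for closedness (Theorem \ref{clo}). Neither is available unconditionally in this paper: the sequence form of continuity is obtained from Theorem \ref{poseq}, whose hypothesis is that the continuum is \emph{connected}, and the sequential test for closedness rests on Theorem \ref{acc}, whose hypothesis is that $\doteq_C$ is \emph{not totally disconnected}. Theorem \ref{cont} assumes neither. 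Concretely, the first step of your (ii)$\,\Rightarrow\,$(iii) --- passing from ``some connected $u\subseteq\dom F$ has disconnected image'' to ``some converging sequence has a non-converging image'' --- is exactly the implication that Theorem \ref{poseq} only licenses for connected $\mathscr{C}_1$; your contrapositive never engages the actual definition (\ref{connect}) in terms of connected sets, so for a (totally) disconnected continuum the argument does not get off the ground. The same hidden hypotheses infect your (iii)$\,\Rightarrow\,$(ii). There is also a small index slip in the separation step: to place $F(a)$ in $\intrr{C_2}{Z_{k+1}(F(a))}$ you must exhibit a set-theoretically definable witness separating $\{F(a)\}$ from $C_2\setminus Z_{k+1}(F(a))$, and the transitivity clause furnishes $Z_{k+2}(F(a))$ for that purpose; the class $Z_{k+1}(F(a))$ itself only separates $\{F(a)\}$ from the smaller class $C_2\setminus Z_{k}(F(a))$, i.e.\ it shows $F(a)\in\intrr{C_2}{Z_k(F(a))}$. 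Either shift the indices once more or take $Z=Z_k(F(a))$ throughout; the detecting closed figure $Y=C_2\setminus\intrr{C_2}{Z}$ is a sound and rather elegant device once the indices are aligned.

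For comparison, the paper never touches sequences. For (iii)$\,\Rightarrow\,$(ii) it writes the closed figure as $Y=\fig{2}{v}$ for a set $v$, shows $\fig{1}{F^{-1}(v)}=F^{-1}(Y)$ using only $a\doteq_1 b\Rightarrow F(a)\doteq_2 F(b)$, and concludes that $F^{-1}(Y)$ is the figure of a set, hence closed by Theorem \ref{closure}. For (ii)$\,\Rightarrow\,$(iii) it works directly with the definition (\ref{connect}): from (ii) it extracts $F(\mon{1}{a})\subseteq\mon{2}{F(a)}$ for each $a$ in a connected set $u$, and then pushes a motion with trace $u$ forward to a motion with trace $F(u)$, so $F(u)$ is connected by Theorem \ref{vop41-1}. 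That route needs no connectedness or non-total-disconnectedness assumptions. If you want to keep your sequence-based proof, you must either add those hypotheses to the theorem or first establish the sequential characterizations without them; otherwise the repair is to argue both directions from the connected-set definition as the paper does.
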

\noindent

\begin{proof}
  To prove (i) $\Rightarrow $ (ii), let us remind that for every $X_2\subseteq C_2$, the following equation holds.
  \begin{equation}\label{setminus}
  {F}^{-1}\left(C_2\setminus X_2\right)\ =\ C_1\setminus{F}^{-1}\left(X_2\right)
  \end{equation}
  Given that (i) holds and $X_2$ is open, then ${F}^{-1}(X_2)$ is also open.
  Since  $X_2$ is open and $C_2$ is set-theoretically definable, $C_2\setminus X_2$ is closed, so too is $C_1\setminus {F}^{-1}(X_2)$.
  It implies that ${F}^{-1}(C_2\setminus X_2)$ is closed by the equation (\ref{setminus}).

  (ii) $\Rightarrow$ (i) follows by a similar argument.

  To prove (iii) $\Rightarrow$ (ii), let $Y$ be a closed figure of $\mathscr{C}_2$.
  Then, there exists $v\subseteq Y$ which satisfies $\fig{2}{v}=Y$, and $\fig{1}{{F}^{-1}(v)}$ is a figure in $\mathscr{C}_1$.
  For every $a\in\fig{1}{{F}^{-1}(v)}$ there exists $y\in v$ which satisfies $a\doteq_1{F}^{-1}(y)$.
  By continuity of $\mathscr{F}$, $a$ satisfies ${F}(a)\doteq_2 y$, which means that ${F}(a)\in\mon{2}{y}\subseteq Y$, thus, $a\in F^{-1}(Y)$.
  Consequently, $\fig{1}{{F}^{-1}(v)}\ =\  F^{-1}(Y)$, thus, ${F}^{-1}(Y)$ is closed.

  To prove (ii) $\Rightarrow$ (iii), let $u\subseteq C_1$ be a connected set of $\mathscr{C}_1$.
  Then, for every $a\in u$, the inverse image of a monad of $y={F}(a)$, that is, ${F}^{-1}\left(\mon{2}{y}\right)$ is a closed figure of $\mathscr{C}_1$.
  Since $y={F}(a)$, ${F}^{-1}\left(\mon{2}{y}\right)$ contains $\mon{1}{a}$ as its subclass.
  It means that all the indiscernibles of $a\in u$  are included in the figure ${F}^{-1}\left(\mon{2}{y}\right)$.
  Thus, ${F}(\mon{1}{a})\subseteq\mon{2}{{F}(a)}$ follows.
  Since $u$ is connected, there exists $\theta\in N$ and a motion $\delta:\theta+1\rightarrow u$ which satisfies $\rng{(\delta)}=u$ and $\delta(\alpha)\doteq_1 \delta(\alpha+1)$ for all $\alpha\in\theta+1$.
  $\delta$ traces along indiscernibles, one by one, so does ${F}(\delta)$ which means that ${F}(\delta)$ is a trace of $\mathscr{C}_2$.
  Therefore, ${F}(u)$ is connected.
\end{proof}

When morphisms are defined on metric continua, continuity can be define by a standard $\varepsilon$-$\delta$ manner.
A morphism both from and to metric continua $\mathscr{F}:\mathscr{C}_1\rightarrow\mathscr{C}_2$ is continuous iff
\[
  \left(\forall e\in Q\right)
  \left(\exists d\in Q\right)
  \left(d>(d(x,a))\Rightarrow(e>d(\mathscr{F}(x),\mathscr{F}(a)))\right).
\]

Lastly, let us claim that continuous morphisms of AST are \textit{uniform} too, where a morphism $\mathscr{F}:\mathscr{C}_1\rightarrow\mathscr{C}_2$ is said to be \textit{uniformly continuous} iff
\[
(\forall k\in\FN)
(\exists j\in\FN)
(\forall x,y\in C_1)
((\langle x,y\rangle\in R_{1_j})\Rightarrow
(\langle F(x),F(y)\rangle \in R_{2_k}))
\]
where $(R_{1_j})_{j\in\FN}$ and $(R_{2_k})_{k\in\FN}$ are generating sequences of $\doteq_1$ and $\doteq_2$ respectively.

\begin{thm}
  Let $\mathscr{F}:\mathscr{C}_1\rightarrow \mathscr{C}_2$ be a continuous morphism.
  Then $\mathscr{F}$ is also uniformly continuous.
\end{thm}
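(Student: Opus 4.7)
The plan is to argue by contradiction, using sequential compactness of $\mathscr{C}_1$ to distill a failure of uniform continuity into a single pair of $\doteq_1$-equivalent positions whose $F$-images are not $R_{2_k}$-close, thereby contradicting the function-level continuity of the representative $F$.

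Suppose $\mathscr{F}$ is not uniformly continuous: there is a $k\in\FN$ such that for every $j\in\FN$ one can pick a pair $(x_j,y_j)\in C_1\times C_1$ with $\langle x_j,y_j\rangle\in R_{1_j}$ yet $\langle F(x_j),F(y_j)\rangle\notin R_{2_k}$. Because $C_1$ is set-theoretically definable it is a $\pi$-class, and being trivially a figure of itself, Theorem \ref{closure} makes it a closed figure and Theorem \ref{comclo} makes it compact, so sequential compactness of $\mathscr{C}_1$ is available. I first extract a subsequence with $\mon{C_\text{1}}{x_{j_i}}\to \mon{C_\text{1}}{x}$, and along it a further sub-subsequence with $\mon{C_\text{1}}{y_{j_{i_\ell}}}\to \mon{C_\text{1}}{y}$. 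Relabelling, I may assume a single index sequence along which $x_j\to x$ and $y_j\to y$ monadically while the bad relations $\langle x_j,y_j\rangle\in R_{1_j}$ and $\langle F(x_j),F(y_j)\rangle\notin R_{2_k}$ persist.

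Next, I show that $x\doteq_1 y$. Fix $n\in\FN$; the convergences provide some $j\geq n+2$ with $\langle x,x_j\rangle,\langle y_j,y\rangle\in R_{1_{n+2}}$, and $\langle x_j,y_j\rangle\in R_{1_j}\subseteq R_{1_{n+2}}$. Two applications of the transitivity-like clause of a generating sequence (``$R_{m+1}$-neighbours of a common point are $R_m$-neighbours'') then give $\langle x,y\rangle\in R_{1_n}$; as $n$ was arbitrary, $x\doteq_1 y$. Function-continuity of $F$ now yields $F(x)\doteq_2 F(y)$, while the sequential form of morphism-continuity (introduced just after Theorem \ref{poseq}) yields $F(x_j)\to F(x)$ and $F(y_j)\to F(y)$ monadically. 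Repeating the same two-step chaining inside $(R_{2_n})_{n\in\FN}$, starting from the fact that $\langle F(x_j),F(x)\rangle$, $\langle F(x),F(y)\rangle$ and $\langle F(y),F(y_j)\rangle$ all lie in $R_{2_{k+2}}$ for sufficiently large $j$, produces $\langle F(x_j),F(y_j)\rangle\in R_{2_k}$—contradicting the choice of the pairs.

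The main technical obstacle is the index accounting inside the generating sequences: each use of the transitivity-like axiom costs one index, so one must start with three ingredients in $R_{2_{k+2}}$ (respectively $R_{1_{n+2}}$) to land in $R_{2_k}$ (respectively $R_{1_n}$). A subsidiary subtlety easy to overlook is that the argument invokes compactness of the continuum $\mathscr{C}_1$ rather than merely compactness of the equivalence $\doteq_1$ itself; this is free, however, because $C_1$ is set-theoretically definable and hence automatically a closed figure of itself.
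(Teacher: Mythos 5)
Your proof is correct, but it takes a genuinely different route from the paper's. The paper's own argument is a two-line reductio that uses only the function-level continuity of the representative $F$: from the failure of uniform continuity it passes directly to a single pair $x\doteq_1 y$ with $F(x)\not\doteq_2 F(y)$ --- implicitly an overspill/prolongation step collapsing the $j$-dependent witnesses $(x_j,y_j)$ into one pair, which the paper does not spell out. You instead run the classical ``continuous on a compact space is uniformly continuous'' argument: you invoke sequential compactness of $\mathscr{C}_1$ (correctly derived from $C_1$ being set-theoretically definable, hence a closed figure by Theorem~\ref{closure} and compact by Theorem~\ref{comclo}), extract limits $x$ and $y$, chain through the generating sequence with careful index accounting to get $x\doteq_1 y$, and then need the \emph{morphism-level} (sequential) continuity of $\mathscr{F}$ to push the subsequences forward and reach the contradiction. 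What your approach buys is a complete and honest treatment of the quantifier issue the paper glosses over; what it costs is (a) reliance on the sequential characterization of morphism continuity, which the paper justifies via Theorem~\ref{poseq} only for \emph{connected} continua, and (b) use of the stronger hypothesis that $\mathscr{F}$ is continuous as a morphism, whereas the paper's argument needs only that the representative $F$ is a continuous function in the sense $a\doteq_1 b\Rightarrow F(a)\doteq_2 F(b)$. Neither point is fatal --- the theorem's hypothesis does grant morphism continuity --- but if $\mathscr{C}_1$ is not connected you should either verify the sequential characterization directly or replace that step; alternatively, the single-pair extraction can be done by prolonging the generating sequences beyond $\FN$ and applying overspill, which recovers the paper's shorter proof without any compactness machinery.
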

\begin{proof}
  Suppose $\mathscr{F}$ is not uniformly continuous.
  Then there exists $k\in\FN$ which satisfies for all $j\in\FN$ there exists some $x,y\in C_1$,
  \[
  (\langle x,y\rangle\in R_{1_j}) \wedge (\langle F(x),F(y)\rangle\notin R_{2_k}).
  \]
  It implies that $x\doteq y$ but $F(x)\not\doteq F(y)$.
  It contradicts with $F$'s continuity.
\end{proof}

\section{Concluding Remarks}
The paper puts in order (1) a way to define basic concepts of topology in AST,
and shows (2) their correspondence with those of conventional ones, and
(3) isomorphicity  of a system of real numbers in AST to that of conventional one.

As it is widely known in model theory, there are many equivalent systems of real numbers other than $\mathbb{R}$ constructed within an axiomatic system of ZFC.
One of renowned examples may be that of nonstandard analysis, see Robinson \cite{rob66} or Davis \cite{Davis} for more detailed information.
Its way to construct a system is essentially the same as AST's,
that is, regarding monads as real numbers.

At odds with that popularity, the attempts regarding morphisms as real functions in the studies of nonstandard analysis are rarely seen except Tsujishita \cite{tjst}, as far as we know.
However, this line of investigations may seem fruitful, especially when one tries to apply them to capture the phenomena in the real world.

One benefit of that rests on its ability enabling us to see continuity of functions, or morphisms, from the more natural perspective.
Remind us here the $\varepsilon$-$\delta$ arguments, which regard continuity of functions indirectly as\textit{ having no gap} in its graph, since it says only that however small the gap is in the range, there always remains enough space left on the domain of the function, assigning the value picked from that domain to the function always resulted in the value staying within that range.
And the process confirming whether there remains unchecked gaps in the range left will not end and last forever by definition.

However, as we saw in the last section, by contrast, viewing continuity by way of motions enables us to check it directly.
The definition requires that morphisms $\mathscr{F}$ to preserve properties of motions, that is, for any motion $\delta$, composed of $\doteq$-chains with its trace, it must be satisfied that $\mathscr{F}(\delta)$ is also a motion.
It simply says that morphism is continuous since its graph is connected.

The key difference is that AST has a way to represent direct connection between two objects, enabled by indiscernibility equivalences $\doteq$, while $\varepsilon$-$\delta$ argument can only judge two are separated, enabled by open sets, which do not guarantee that they are connected in general.

This same ability of AST enables the authors to deal with the problem of e-mail game, that is, how agents can come to know that contents of e-mails are shared, in Sakahara and Sato \cite{cogjump}.
They come to know that they share the same information, not because there remains no possibility of not knowing, but they simply ignore indiscernible differences how many times they confirmed that.
They did it by, say, \textit{jump}.
It is the same jump which enable us to judge the motion of points has no gap in its trace.

It is also worth pointing out that real functions and their representations are defined on the same continua.
Consequently, results acquired from them reside in the same domain.
In contrast, when one works within a nonstandard framework, the system of reals and its extensions reside in different domains.
The result concerning, say, external elements, thus, have no room to reside in the standard domain.
They are simply nonexistent.

Nevertheless, it is not, of course, an essential problem of nonstandard analysis, since the arguments provided here can also be applied to nonstandard analysis.
It can also be said that the perspective this paper presents benefits nonstandard analysis too.
\bibliographystyle{plain}
\bibliography{ref}

\end{document}